\newtheorem*{acknowledgement}{Acknowledgement}
\newtheorem{lemma}{Lemma}
\newtheorem{proposition}{Proposition}
\newtheorem{remark}{Remark}
\newtheorem{theorem}{Theorem}
\numberwithin{equation}{section}
\newcommand{\minvol}{{\rm Min\,Vol}}
\newcommand{\vol}{{\rm Vol}}
\title[Four-dimensional manifolds]{Minimal volume invariants, topological sphere theorems and biorthogonal curvature on 4-manifolds}
\author{E. Costa}
\author{E. Ribeiro Jr.}
\address[E. Costa]{Universidade Federal da Bahia - UFBA, Departamento de Matem\'{a}tica, Campus de Ondina, Av. Ademar de Barros, 40170-110, Salvador / BA, Brazil.}\email{ezio@ufba.br}
\address[E. Ribeiro Jr]{Universidade Federal do Cear\'a - UFC, Departamento  de Matem\'atica, Campus do Pici, Av. Humberto Monte, Bloco 914,
60455-760, Fortaleza / CE, Brazil.}\email{ernani@mat.ufc.br}
\numberwithin{equation}{section}
\numberwithin{theorem}{section}
\thanks{E. Ribeiro Jr was partially supported by grants from CNPq/Brazil (Grant: 303091/2015-0) and PRONEX-FUNCAP/CNPq/Brazi}
\keywords{minimal volume, sphere theorem, biorthogonal curvature, 4-manifold} \subjclass[2000]{Primary 53C21, 53C23; Secondary 53C25}
\begin{document}

\begin{abstract}
The goal of this article is to establish estimates involving the Yamabe minimal volume, mixed minimal volume and some topological invariants on compact 4-ma\-ni\-folds. In addition, we provide topological sphere theorems for compact subma\-ni\-folds of spheres and Euclidean spaces, provided that the full norm of the second fundamental form is suitably bounded.
\end{abstract}

\maketitle
\newcommand{\spacing}[1]{\renewcommand{\baselinestretch}{#1}\large\normalsize}
\spacing{1.2}

\section{Introduction}\label{introduction}

\subsection{Minimal Volume Invariants}
\label{minvolinvar}

Let $M^n$ be an $n$-dimen\-sio\-nal compact oriented smooth manifold with scalar curvature $s_g,$ or simply $s,$ sectional curvature $K$ and $\mathcal{M}$ be the set of smooth Riemannian structures on $M^n.$ Moreover, we consider all complete Riemannian structures $g\in\mathcal{M}$ whose sectional curvatures satisfy $|K(g)|\leq 1.$ With these settings, Gromov \cite{Gromov}, in his seminal paper on bounded cohomology, introduced the concept of minimal volume. More precisely, the {\it minimal volume} of $M^n$ is defined by
\begin{equation}
\label{minvol}
\minvol(M)=\inf_{|K(g)|\leq1}\vol(M,g).
\end{equation}
This concept plays an important role in geometric topology. It is closely related with other important invariants as, for instance, {\it minimal entropy} ${\rm h(M)}$ and {\it simplicial volume} $\| M\|.$ Paternain and Petean \cite{PP} proved that the minimal volume, on a compact manifold $M^n,$ satisfies the following chain of inequalities
\begin{equation}
c(n)\| M\|\leq [{\rm h(M)}]^n \leq (n-1)^{n} {\rm Min\,Vol(M)},
\end{equation}where $c(n)$ is a positive constant; for more details, we refer the reader to \cite{Kotschick,LS} and \cite{PP}. It should be emphasized that some authors have studied other minimal volume invariants in a similar context (cf. \cite{LeBrun,Sung} and \cite{SungAnnals}). Among them, let us highlight the following ones: {\it Gromov minimal volume}, which is defined by
\begin{equation}
\label{Gromovminvol}
\vol_K(M)=\inf\{\vol(M,g);\,K_g\geq-1\},
\end{equation} as well as the {\it Yamabe minimal volume} which is given by
\begin{equation}
\label{Yamabeminvol}
\vol_s(M)=\inf\left\{\vol(M,g);\,\frac{s_g}{n(n-1)}\geq-1\right\}.
\end{equation} The Yamabe minimal volume measures how much the negative scalar curvature is inevitable on a compact manifold. In fact, if a compact manifold $M^n$ admits a metric with nonnegative scalar curvature, then $\vol_s(M)=0.$ In general, the computation of these invariants is not easy. However, Petean \cite{Petean2} showed brightly that any compact simply connected manifold $M^n$ of dimension $n\ge 5$ has $\vol_s(M)=0.$ In other words, it collapses with scalar curvature bounded from below. While LeBrun  \cite{LeBrun} defined a new type of minimal volume called {\it mixed minimal volume}, which is given by
\begin{equation}
\label{mixedminvol}
\vol_{K,s}(M)=\inf\left\{\vol(M,g);\,\frac{1}{2}\left(K_g+\frac{s_g}{n(n-1)}\right)\geq-1\right\}.
\end{equation} Another interesting mixed minimal volumes were studied by Sung (cf. \cite{Sung} and \cite{SungAnnals}).

Before proceeding, we recall that a compact complex surface $M^4$ is said of {\it general type} if the Kodaira dimension of $M^4$ is equal to 2; for details see \cite{LeBrunCAG}. Lebrun  \cite{LeBrunTrends} showed that  a compact complex surface $M^4$ with the first Betti number $b_{1}(M)$ even is of general type if and only if its Yamabe invariant $Y(M)$ is negative. The hypothesis that $b_{1}(M)$ is even is equivalent to requiring that the complex surface $M^4$ admits a K\"ahler metric.

The Seiberg-Witten equations are
\begin{eqnarray*}
\left \{ \begin{array}{ll}
D^{A}\phi=0 \\
F_{A}^{+}=\sigma(\phi),
                    \end{array} \right.
\end{eqnarray*} where $A$ is a Hermitian connection on the line bundle $\mathcal{L},$ $D^{A}$ denotes the Dirac operator induced by $A,$ while $F_{A}^{+}$ is the self-dual part of the imaginary-valued curvature $2$-form of $A$ and $\sigma$ is a certain canonical real-quadratic map (cf. \cite{scorpan} and \cite{witten}). Many of the most remarkable consequences of Seiberg-Witten theory stem from the fact that a solution $\phi$ of the Seiberg-Witten equations satisfies the Weitzenb\"och formula 
\begin{equation}
2\Delta \mid \phi\mid^{2}+4\mid \nabla \phi \mid^{2}+s\mid \phi\mid^{2}+\mid \phi\mid^{4}=0.
\end{equation} It follows from Seiberg-Witten theory that any two diffeomorphic complex algebraic surfaces must have the same Kodaira dimension. Also, by using the Seiberg-Witten theory it is possible to compute the Yamabe invariants of most of complex algebraic surfaces. Nonetheless, the Yamabe invariants often distinguish different smooth structures on the same topological 4-manifold. Generally speaking, in dimension 4, the Yamabe invariant alone is weak to control the topology of a given manifold. For this reason, it is expected one additional condition. We refer the reader to Chapter 10 in\cite{scorpan} for a detailed discussion on Seiberg-Witten theory; see also \cite{Donaldson,LeBrun,LeBrunP} and \cite{witten}.

A complex surface is said to be {\it minimal} if it is not the blow-up of some other complex surface. Any compact complex surface $M^4$ can be obtained from some minimal complex surface $X,$ called {\it minimal model} for $M^4,$ by blowing up $X$ at a finite number of points. In particular, from Kodaira's classification theory a complex surface is of general type if and only if it has a minimal model with $c_{1}^{2}>0$ and $c_{1}\cdot [\omega]<0$ for some K\"ahler class, where $c_{1}$ is the first Chern class. The compact complex-hyperbolic 4-manifold  $\Bbb{C}\mathcal{H}^{2}/\Gamma$ is a classical example of minimal compact complex surface of general type. Usually, the complex-hyperbolic plane $\Bbb{C}\mathcal{H}^{2}$ can be seen as the unit ball in $\Bbb{C}^{2}$ endowed with the Bergmann metric. Furthermore, it is well-known that $c_{1}^{2}(X)=\big(2\chi(X)+3\tau(X)\big),$ where $\chi$ and $\tau$ stand, respectively, for the Euler cha\-rac\-teristic and the signature. For more details, see, for instance \cite{Barth} and \cite{LeBrunAnn}.

In 2011, LeBrun  \cite{LeBrun} showed the following result via Seiberg-Witten theory.

\begin{theorem}[LeBrun, \cite{LeBrun}]
\label{thLeBrun}
Let $M^4=X\sharp j \Bbb{CP}^2$ be a compact K\"ahler surface, where $X$ is the minimal model of $M^4.$ If the Yamabe invariant of $M^4$ is negative, then
$$\vol_{K,s}(M^4)\geq \frac{9}{4}\vol_s(M^4).$$
Moreover, the equality holds if $M^4$ is a compact complex-hyperbolic 4-manifold $\Bbb{C}\mathcal{H}^{2}/\Gamma.$
\end{theorem}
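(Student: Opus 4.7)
The plan is to reduce both volume infima to inequalities involving the topological invariant $c_{1}^{2}(X)$ via Seiberg--Witten theory, and then compare. Since $M$ is K\"ahler with negative Yamabe invariant, LeBrun's classification recalled above forces the minimal model $X$ to be of general type, so $c_{1}^{2}(X)=(2\chi+3\tau)(X)>0$ and the canonical $\mathrm{Spin}^{c}$ structure of $X$ carries a nontrivial Seiberg--Witten invariant; this persists on the blow-up $M=X\sharp j\,\Bbb{CP}^{2}$ with an adjusted basic class. The two analytic inputs I would use are Witten's scalar-curvature estimate
\[\int_{M}(s_{g}^{-})^{2}\,dV_{g}\;\geq\;32\pi^{2}\,c_{1}^{2}(X)\]
and LeBrun's refinement
\[\int_{M}\bigl((s_{g}-\sqrt{6}\,|W^{+}_{g}|)^{-}\bigr)^{2}\,dV_{g}\;\geq\;72\pi^{2}\,c_{1}^{2}(X),\]
both valid for every Riemannian metric $g$ on $M$, where $x^{-}:=\max(-x,0)$.

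For the Yamabe minimal volume, the normalization $s_{g}/12\geq -1$ yields $s_{g}^{-}\leq 12$ pointwise and therefore $\int_{M}(s_{g}^{-})^{2}\,dV_{g}\leq 144\,Vol(g)$; combined with Witten's estimate this gives $\vol_{s}(M)\geq \tfrac{2\pi^{2}}{9}c_{1}^{2}(X)$. The matching upper bound is realized by the K\"ahler--Einstein metric on $X$ rescaled so $s=-12$ and glued to each $\Bbb{CP}^{2}$ summand by a small Riemannian connected sum, so in fact $\vol_{s}(M)=\tfrac{2\pi^{2}}{9}c_{1}^{2}(X)$.

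The heart of the argument is the pointwise implication
\[\tfrac{1}{2}\bigl(K_{g}+s_{g}/12\bigr)\geq -1\;\Longrightarrow\;\sqrt{6}\,|W^{+}_{g}|\;\leq\;s_{g}+12,\]
which I would derive from the Singer--Thorpe block decomposition of the curvature operator in dimension four. Any unit self-dual 2-form can be written as $\omega^{+}=(e_{1}\wedge e_{2}+e_{3}\wedge e_{4})/\sqrt{2}$ in a suitable orthonormal frame, and then $W^{+}(\omega^{+},\omega^{+})=\tfrac{1}{2}\bigl(K(e_{1}\wedge e_{2})+K(e_{3}\wedge e_{4})\bigr)+R_{1234}-s_{g}/12$. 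Pairing with the companion anti-self-dual form $\omega^{-}=(e_{1}\wedge e_{2}-e_{3}\wedge e_{4})/\sqrt{2}$ produces the biorthogonal curvature identity $K^{\perp}(P)=s_{g}/12+(w^{+}_{P}+w^{-}_{P})/2$ that is central to this paper, and this together with the sectional lower bound $K\geq -2-s_{g}/12$ controls the extremal eigenvalues of $W^{+}$ and yields the claimed estimate. This curvature-tensor lemma is the step I expect to be the main obstacle, since it must exploit the precise linear combination of sectional and scalar curvature appearing in the mixed constraint.

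Once the pointwise lemma is in hand, $(s_{g}-\sqrt{6}|W^{+}_{g}|)^{-}\leq 12$ everywhere, so LeBrun's refined estimate forces $Vol(g)\geq \tfrac{\pi^{2}}{2}c_{1}^{2}(X)$; combined with the Yamabe bound above this gives
\[\vol_{K,s}(M)\;\geq\;\tfrac{\pi^{2}}{2}c_{1}^{2}(X)\;=\;\tfrac{9}{4}\,\vol_{s}(M).\]
For the equality case $M=\Bbb{C}\mathcal{H}^{2}/\Gamma$, the Bergman metric homothetically rescaled to saturate $\tfrac{1}{2}(K+s/12)=-1$ has volume exactly $\tfrac{\pi^{2}}{2}c_{1}^{2}(X)$ and attains pointwise equality in both the curvature-tensor lemma and LeBrun's refined Seiberg--Witten estimate, so the ratio $9/4$ is sharp.
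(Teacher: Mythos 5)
Your overall architecture (Seiberg--Witten curvature estimates, a pointwise comparison between the mixed constraint and the curvature quantity in the estimate, plus LeBrun's value $\vol_s(M)=\tfrac{2\pi^{2}}{9}c_{1}^{2}(X)$) is the right shape, and note the paper does not prove Theorem \ref{thLeBrun} itself but adapts exactly this strategy in Proposition \ref{propT}/Theorem \ref{thmT}. However, the step you yourself flagged as the heart of the argument is a genuine gap: the pointwise implication $\tfrac12\bigl(K+\tfrac{s}{12}\bigr)\ge -1\Rightarrow\sqrt{6}\,|W^{+}|\le s+12$ is false. Take an algebraic curvature tensor with $\mathring{Ric}=0$, $W^{-}=0$ and $W^{+}$ of spectrum $(-\lambda,-\lambda,2\lambda)$; then every sectional curvature equals $\tfrac{s}{12}+\tfrac12\langle W^{+}\omega_{P}^{+},\omega_{P}^{+}\rangle\ge\tfrac{s}{12}-\tfrac{\lambda}{2}$, so the mixed constraint only forces $\lambda\le 4+\tfrac{s}{3}$, while $\sqrt{6}|W^{+}|=6\lambda$ can reach $24+2s=2(s+12)$; for instance $s=0$, $\lambda=4$ satisfies the constraint but gives $\sqrt{6}|W^{+}|=24>12$. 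Since any algebraic curvature tensor is realized at a point by some metric and the competitor metrics in the infimum are arbitrary, this cannot be excluded. Structurally, a sectional lower bound only controls the lowest eigenvalue, $w_{1}^{+}\ge 2K_{\min}-\tfrac{s}{6}\ge -4-\tfrac{s}{3}$, and passing to the full norm costs $|W^{+}|\le\sqrt{6}\,|w_{1}^{+}|$, which is sharp exactly on the spectrum above; worse, because the constraint gives no upper bound on $s$, the quantity $\bigl(s-\sqrt{6}|W^{+}|\bigr)^{-}$ is not uniformly bounded on the constraint set at all, so LeBrun's $L^{2}$ Weyl estimate $\int\bigl((s-\sqrt{6}|W^{+}|)^{-}\bigr)^{2}\ge 72\pi^{2}c_{1}^{2}$ cannot by itself produce the needed volume bound, let alone the constant $\tfrac{\pi^{2}}{2}c_{1}^{2}$.

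The repair is to use the Seiberg--Witten estimate formulated with $w_{1}^{+}$ rather than with $|W^{+}|$, which is what LeBrun does and what Proposition \ref{propT} adapts (there with $w_{1}^{+}+w_{1}^{-}$, i.e.\ with $K_{1}^{\perp}$): the Weitzenb\"ock term is bounded below by $w_{1}^{+}$ directly, and after H\"older one gets $32\pi^{2}c_{1}^{2}(X)\le\bigl(\int_{M}|(\tfrac23 s+2w_{1}^{+})_{-}|^{3}dV_{g}\bigr)^{2/3}Vol(M,g)^{1/3}$. The combination $\tfrac23 s+2w_{1}^{+}$ is precisely the one for which the scalar curvature cancels against the mixed constraint: $\tfrac23 s+2w_{1}^{+}\ge\tfrac{s}{3}+4K_{\min}\ge -8$, so the right-hand side is at most $64\,Vol(M,g)$ and hence $Vol(M,g)\ge\tfrac{\pi^{2}}{2}c_{1}^{2}(X)=\tfrac94\vol_{s}(M)$ for every metric in the constraint set. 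Your chosen combination $s-\sqrt{6}|W^{+}|$ lacks this cancellation, which is exactly why the constant degrades. The remaining ingredients you invoke (general type from $Y(M)<0$, persistence of the basic class under blow-up, the value $\vol_{s}(M)=\tfrac{2\pi^{2}}{9}c_{1}^{2}(X)$, and the equality check for $\Bbb{C}\mathcal{H}^{2}/\Gamma$, where the Bergmann metric rescaled to $s=-8$ saturates the constraint with volume $\tfrac{\pi^{2}}{2}c_{1}^{2}$) are fine as citations and computations, so replacing your lemma by the $w_{1}^{+}$ (or $K_{1}^{\perp}$) estimate completes the proof.
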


It is worth mentioning that $\vol_s(M)=\frac{2\pi^{2}}{9}c_{1}^{2}(X),$ where $X$ is the minimal model for $M^4$ (cf. \cite{LeBrunMRS}).

The primary goal of this article is to provide some estimates involving Yamabe minimal volume, mixed minimal volume and some topological invariants on compact 4-ma\-ni\-folds. In addition, we shall provide topological sphere theorems for compact submanifolds of spheres and Euclidean spaces provided that the full norm of the second fundamental form is bounded. In order to do so, let us remember that, for each plane $P\subset T_{x}M$ at a point $x\in M^4,$ the {\it biorthogonal (sectional) curvature} of $P$ is given by the following average of the sectional curvatures
\begin{equation}\label{[1.2]}
 \displaystyle{K^\perp (P) = \frac{K(P) + K(P^\perp) }{2}},
\end{equation}
where $P^\perp$  is the orthogonal plane to  $P.$ In particular,  for each point $x\in M^4,$ we take the minimum of bior\-tho\-go\-nal curvature to obtain the following function
\begin{equation}
\label{[1.3]}
K_1^\perp(x) = \textmd{min} \{K^\perp(P); P  \textmd{ is a 2- plane in } T_{x}M \}.
\end{equation} The sum of pair of sectional curvatures on two orthogonal planes, which was perhaps first observed by Chern \cite{Chern}, plays a crucial role in dimension four. Surprisingly, the positivity of the biorthogonal curvature
is an intermediate condition between positive sectional curvature and positive
scalar curvature. Thereby, it is expected to get interesting results in considering this approach. Gray \cite{Gray} showed that the Euler characteristic $\chi(M)$ of a compact oriented 4-manifold $M^4$ is nonnegative, provided that the sectional curvature satisfies $\frac{K(P^\perp)}{ K(P)} \geq   \frac{3}{4},$ whenever $K(P)\neq 0.$ Singer and Thorpe \cite{ST} observed that a $4$-manifold $(M^4,\,g)$ is Einstein if and only if $K^\perp(P)=K(P),$ for any plane $P\subset T_{x}M$ at any point $x\in M^4.$ From Seaman \cite{SeamanTAMS} and Costa and Ribeiro \cite{CR}, $\Bbb{S}^4$ and $\Bbb{CP}^2$ are the only compact simply-connected 4-manifolds with positive biorthogonal curvature that can have (weakly) $1/4$-pinched biorthogonal curvature, or nonnegative isotropic curvature, or satisfying $K^{\perp} \ge \frac{s}{24} > 0.$ In \cite{renato2}, Bettiol has proven that the positivity of biorthogonal curvature is preserved under connected sums. Furthermore, he showed that $\Bbb{S}^4,$ $\sharp^{m}\Bbb{CP}^2\sharp^{n}\overline{\Bbb{CP}}^2$ and $\sharp^{n}\big(\Bbb{S}^2\times \Bbb{S}^2\big)$ admit metrics with positive biorthogonal curvature. For more details on this subject, see, for instance \cite{renato,renato2,CR,Ribeiro,Noronha,noronha2} and \cite{Seaman}.

In order to proceed, we recall that an extension of the Yamabe problem was started by Gursky and LeBrun (cf. \cite{Gursky1} and \cite{GLB}). To do so, they used a modified scalar curvature which obeys a conformal invariance property. Afterward, based on ideas developed by Lebrun and Gursky \cite{GLB}, Cheng and Zhu \cite{CZ} studied the modified Yamabe problem in terms of a functional depending on Weyl curvature tensor (see also \cite{listing10}, Section 2.2), which is a fundamental tool in this work. Itoh \cite{Itoh} described the global geometry of the modified scalar curvature.  For the sake of completeness let us sketch here this construction. Initially, consider maps $f:\mathcal{W}_{M}\to C^{0,\alpha}(M)$ satisfying 
\begin{eqnarray}
\label{F}
\left \{ \begin{array}{lll}
       f(W_{g})\geq 0,\\
f(W_{\overline{g}})=u^{-2}f(W_{g}),\\
\bar{g}=u^2g,                
\end{array} \right.
\end{eqnarray} where $\mathcal{W}_{M}$ denotes the space consisting of conformal curvature tensors $W_{g},$ $g\in\mathcal{M}.$ Hence, the {\it modified Yamabe functional}  is given by
\begin{equation}
\label{yamabemodified}
  \displaystyle{Y^{f}(M^4,\,g)=\frac{1}{\vol(M,g)^{\frac{1}{2}}}\int_{M}\big(s_{g} -f(W)\big)dV_g},
\end{equation} where, in this case, $s-f(W)$ is called {\it modified scalar curvature} of $(M^4,g).$ Denoting by $[g]$ the conformal class of $g,$ we deduce that $Y^{f}(M^4, [g]) =  \inf_{\overline{g}\in [g]} Y^{f}(M^{4}, \overline{g})$ and $Y^{f}(M) = \sup_{g\in\mathcal{M}} Y^{f}(M^{4}, [g])$ are the {\it modified Yamabe constant} and the {\it modified Yamabe invariant}, respectively (cf. \cite{CZ} and \cite{Itoh}).

In \cite{CDR}, Costa et al. were able to show that $12K_{1}^{\perp}$ is a modified scalar curvature as well as $Y_1^\perp(M^4, [g]),$ defined by
\begin{equation}
Y_1^\perp(M^4, [g]) =  \inf_{\overline{g} \in [g]}\, \frac{12}{\vol(M,g)^{\frac{1}{2}}}\int_{M}\overline{K}_1^\perp dV_{\overline{g}},
\end{equation} is a {\it modified Yamabe constant}, where $[g]$ denotes the conformal class of $g.$ Thus, we conclude that
\begin{equation}
\label{Yi}Y_{1}^{\perp}(M)=\sup_{g\in\mathcal{M}} Y_{1}^{\perp}(M,[g])
\end{equation} is its {\it modified Yamabe invariant}. In particular, we have $Y_1^{\perp}(M)\leq Y(M),$ where $Y(M)$ denotes the standard Yamabe invariant of $M^4.$ We further notice that:

\begin{eqnarray*}
\left \{ \begin{array}{lll}
Y_1^{\perp}(\Bbb{C}\mathcal{H}^{2}/\Gamma) \geq \frac{1}{6}Y(\Bbb{C}\mathcal{H}^{2}/\Gamma), \\
 Y_{1}^{\perp}(\Bbb{H}^{2}\times\Bbb{H}^{2}/\Gamma) \geq \frac{1}{4} Y(\Bbb{H}^{2}\times\Bbb{H}^{2}/\Gamma)\,\,\,\,\,\,\,\hbox{and}\\
Y_1^{\perp}(\Bbb{H}^4/\Gamma)=Y(\Bbb{H}^4/\Gamma)=-8\pi\sqrt{3\chi(\Bbb{H}^4/\Gamma)}.
                    \end{array} \right.
\end{eqnarray*}

In this article, mainly motivated by outstanding ideas outlined by Gursky \cite{Gursky1} and LeBrun \cite{LeBrun}, we use the notion of biorthogonal (sectional) curvature (see also Eq. (\ref{[1.6]}) in Section \ref{prel}) to define a minimal volume invariant of $M^4$ by setting
\begin{equation}
\label{mixedminvolmod}
\vol_{K_{1}^{\perp} , s}(M)=\inf\left\{\vol(M,g);\,\frac{1}{2}\left(K_1^\perp+\frac{s}{12}\right)\geq-1\right\}.
\end{equation} The key ingredient here that should be emphasized is that $\frac{1}{2}\big(K_1^\perp+\frac{s}{12}\big)$ is a modified scalar curvature (cf. \cite{CDR} and \cite{Itoh}). This fact plays a crucial role in the proof of our first result. For what follows, we set $Y_{K_{1}^{\perp},s}(M)$ to be its corresponding modified Yamabe invariant.

After these preliminary remarks we may state our first result, which can be compared with Theorem \ref{thLeBrun} by LeBrun.

\begin{theorem}\label{thmT}
Let $M^4=X\sharp j \Bbb{CP}^2$ be a compact K\"ahler surface, where $X$ is the minimal model of $M^4.$ Suppose that $Y(M)<0.$ Then we have:
\begin{equation}
\vol_{K,s}(M)\geq\vol_{K_1^\perp,s}(M)\geq|Y_{K_{1}^{\perp},s}(M)|^{2}\geq\frac{9}{4}\vol_s(M).
\end{equation} Moreover, the equalities hold if $M^4$ is the compact complex-hyperbolic 4-manifold $\Bbb{C}\mathcal{H}^{2}/\Gamma.$
\end{theorem}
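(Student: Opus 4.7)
I would split the chain into its three inequalities. The \emph{first inequality} $\vol_{K,s}(M) \geq \vol_{K_1^\perp,s}(M)$ is elementary: at each point $x$ and for any $g$, $K_1^\perp(x) \geq \min_{P}K_g(P,x)$, since $K^\perp(P) = \tfrac12(K(P)+K(P^\perp))$ averages two sectional curvatures. Hence every $g$ with $\tfrac12(K_g + s_g/12)\geq -1$ also satisfies $\tfrac12(K_1^\perp + s_g/12)\geq -1$, so the admissible set for $\vol_{K,s}$ is contained in that for $\vol_{K_1^\perp,s}$, and taking infima yields the inequality.

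For the \emph{second inequality}, set $F_g := \tfrac12(K_1^\perp + s_g/12)$; by \cite{CDR,Itoh} this is a modified scalar curvature, so its modified Yamabe invariant admits a conformal-Laplacian-type variational theory. Adapting the Akutagawa--LeBrun H\"older argument gives, whenever $Y_{K_1^\perp,s}(M)\leq 0$,
$$
|Y_{K_1^\perp,s}(M)|^2 \leq \int_M (F_g)_-^2\,dV_g \qquad \text{for every metric } g.
$$
For $g$ admissible in the definition of $\vol_{K_1^\perp,s}$, $(F_g)_-\leq 1$, so the right-hand side is bounded by $\vol(M,g)$; taking the infimum over admissible $g$ yields $|Y_{K_1^\perp,s}(M)|^2 \leq \vol_{K_1^\perp,s}(M)$.

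The \emph{third inequality} rests on two ingredients: (i) the Akutagawa--LeBrun--Petean identity $\vol_s(M)=|Y(M)|^2/144$ in dimension four (valid when $Y(M)\leq 0$), and (ii) the bound $Y_{K_1^\perp,s}(M)\leq Y(M)/8$. For (ii) I would use the Weyl decomposition
$$
K^\perp(P)=\frac{s}{12}+\frac12\bigl[\langle W^+\omega^+_P,\omega^+_P\rangle+\langle W^-\omega^-_P,\omega^-_P\rangle\bigr],
$$
where $\omega^\pm_P$ denote the unit self-/anti-self-dual components of the 2-form representing $P$. On a K\"ahler surface with $s<0$, the K\"ahler identity $|W^+|^2=s^2/24$ pins down the spectrum of $W^+$ as $\{s/6,-s/12,-s/12\}$, with smallest value $s/6$ attained at the K\"ahler-form direction; since $W^-$ is trace-free, its smallest eigenvalue is $\leq 0$, and evaluation at a holomorphic plane yields the pointwise bound $K_1^\perp\leq s/6$. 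An integration-by-parts argument promotes this to $\int K_1^\perp\,dV\leq\tfrac{1}{6}\int s\,dV$ throughout the conformal class of a K\"ahler reference metric. Evaluating the modified Yamabe functional at the Yamabe minimizer $g_Y\in[g]$ (constant scalar curvature $s_{g_Y}$) gives
$$
Y_{K_1^\perp,s}(M,[g])\leq \frac{\int_M F_{g_Y}\,dV_{g_Y}}{\sqrt{\vol(g_Y)}}\leq \bigl(\tfrac{1}{12}+\tfrac{1}{24}\bigr)\frac{\int_M s_{g_Y}\,dV_{g_Y}}{\sqrt{\vol(g_Y)}} = \frac{Y([g])}{8}.
$$
Taking suprema and using $|Y_{K_1^\perp,s}(M)|=\inf_{[g]}|Y_{K_1^\perp,s}(M,[g])|$ together with $|Y([g])|\geq |Y(M)|$ gives $|Y_{K_1^\perp,s}(M)|^2\geq|Y(M)|^2/64=\tfrac{9}{4}\vol_s(M)$.

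For the \emph{equality case}, on $\Bbb{C}\mathcal{H}^2/\Gamma$ with the Bergman metric, $W^-\equiv 0$ and a direct computation gives $K_1^\perp\equiv s/6$ pointwise; the metric is Einstein, Yamabe-minimizing in its class, and --- after rescaling --- saturates the admissibility condition of $\vol_{K,s}$, so all three inequalities become equalities and LeBrun's Theorem~\ref{thLeBrun} is recovered. The \emph{main obstacle} is extending the K\"ahler pointwise inequality $K_1^\perp\leq s/6$ --- which depends on the identity $|W^+|^2=s^2/24$ --- to the (generally non-K\"ahler) Yamabe minimizer $g_Y$; this is handled by combining the conformal invariance of $\int|W^+|^2\,dV$ with an integration by parts against the Yamabe equation.
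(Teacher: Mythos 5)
Your handling of the first two inequalities is essentially the paper's own route: nested admissibility sets give $\vol_{K,s}(M)\geq\vol_{K_1^\perp,s}(M)$, and the identification (Itoh \cite{Itoh}, Proposition 3 of \cite{CDR}) of $|Y_{K_1^\perp,s}(M)|^2$ with $\inf_{g}\int_M|\tfrac12(K_1^\perp+\tfrac{s}{12})|^2dV_g$, applied to metrics with $\tfrac12(K_1^\perp+\tfrac{s}{12})\geq-1$, gives the middle step. The genuine gap is in the third inequality. Both $Y_{K_1^\perp,s}(M)$ (a supremum over \emph{all} conformal classes) and $\inf_g\int_M|\tfrac12(K_1^\perp+\tfrac{s}{12})|^2dV_g$ demand an estimate valid for \emph{every} metric on $M^4$, whereas your key input $K_1^\perp\leq\tfrac{s}{6}$ (equivalently $\tfrac12(K_1^\perp+\tfrac{s}{12})\leq\tfrac{s}{8}$) is a pointwise identity special to K\"ahler metrics of negative scalar curvature; it already fails for any metric with $W=0$, where $K_1^\perp=\tfrac{s}{12}>\tfrac{s}{6}$ when $s<0$. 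Bounding the modified Yamabe constant of the one K\"ahler conformal class from above gives a lower bound for $|Y_{K_1^\perp,s}(M,[g])|$ only in that class, while $|Y_{K_1^\perp,s}(M)|=\inf_{[g]}|Y_{K_1^\perp,s}(M,[g])|$ could a priori be much smaller; your proposed repair (conformal invariance of $\int|W^+|^2dV$ plus integration by parts against the Yamabe equation) never leaves the K\"ahler conformal class, so it cannot control this infimum. The only metric-independent pointwise information available is $w_1^\pm\leq0$, i.e. $\tfrac12(K_1^\perp+\tfrac{s}{12})\leq\tfrac{s}{12}$, and that yields only $|Y_{K_1^\perp,s}(M)|^2\geq\tfrac{1}{144}|Y(M)|^2=\vol_s(M)$: the factor $\tfrac94$ is exactly what is lost.

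What supplies that factor in the paper is Seiberg--Witten theory, which your plan omits entirely. Since $M^4=X\sharp j\,\Bbb{CP}^2$ is a K\"ahler surface with $Y(M)<0$, it carries monopole classes, so the Seiberg--Witten equations have solutions for \emph{every} metric; Proposition \ref{propT} (LeBrun's curvature estimate, converted to $K_1^\perp$ via (\ref{[1.6]}) and $w_1^-\leq0$) then gives, for every $g$,
\begin{equation*}
\Big(\int_M\big|\tfrac12\big(K_1^\perp+\tfrac{s}{12}\big)_-\big|^3dV_g\Big)^{\frac23}Vol(M,g)^{\frac13}\;\geq\;\frac{\pi^2}{2}\big(2\chi(M)+3\tau(M)\big),
\end{equation*}
and together with the conformal/H\"older manipulations this bounds $\int_M|\tfrac12(K_1^\perp+\tfrac{s}{12})|^2dV_g$ below by a fixed multiple of $c_1^2(X)$ for all $g$; LeBrun's formula $\vol_s(M)=\tfrac{2\pi^2}{9}c_1^2(X)$ (itself a Seiberg--Witten result, \cite{LeBrunMRS,LeBrun}) then produces the right-hand side $\tfrac94\vol_s(M)$. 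Your ingredient $\vol_s(M)=\tfrac{1}{144}|Y(M)|^2$ is correct but is no substitute for this global, metric-independent estimate; without the monopole-class input the chain cannot close. (Your discussion of the equality case for $\Bbb{C}\mathcal{H}^{2}/\Gamma$ is fine once the inequalities themselves are established.)
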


\begin{remark}
As it was previously mentioned a four-dimensional Riemannian manifold $(M^4,\,g)$ is Einstein if and only if $K^\perp=K.$ Therefore, it is not difficult to check that $$\vol_{K,s}(M)=\vol_{K_1^\perp,s}(M)=\frac{9}{4}\vol_s(M),$$ for any compact complex-hyperbolic 4-manifold $\Bbb{C}\mathcal{H}^{2}/\Gamma.$ Furthermore, we claim that
\begin{equation}
\label{eq679}
\vol_{K_{1}^{\perp},s}(\Bbb{H}^2\times\Bbb{H}^2 /\Gamma)\leq \frac{8\pi^{2}}{3}\chi(\Bbb{H}^2\times\Bbb{H}^2 /\Gamma).
\end{equation} Indeed, we consider $\Bbb{H}^2\times\Bbb{H}^2 /\Gamma$ endowed with its canonical K\"ahler-Einstein metric. In this case, we have $K_{1}^{\perp}=\frac{s}{4}$ and $|W^{+}|^{2}=|W^{-}|^{2}=\frac{s^{2}}{24}.$ Therefore, by using Chern-Gauss-Bonnet formula (\ref{characteristic}) we infer $$96\pi^{2}\chi\big(\Bbb{H}^2\times\Bbb{H}^2 /\Gamma\big)=s^{2}\vol\big(\Bbb{H}^2\times\Bbb{H}^2 /\Gamma\big).$$ Choosing the normalized scalar curvature $s=-6,$ we immediately obtain $\frac{1}{2}\Big(s+12K_{1}^{\perp}\Big)=-12,$ which settles our claim.
\end{remark}

In the work \cite{LeBrunP}, LeBrun considered $\mathcal{C}\subset H^{2}(M,\Bbb{R})$ to be the set of {\it monopole classes}, which are the first Chern classes of those $spin^{c}$ structures on $M^4$ for which the Seiberg-Witten equation have solution for all metrics. In particular, its convex hull, denoted by ${\rm {\bf Hull}}(\mathcal{C}),$ is compact. From this, he introduced a real-valued invariant of $M^4$ by setting

\begin{equation}
\beta^{2}(M)=\max \left\{\int_{M}\alpha \wedge \alpha ;\,\, \alpha\in {\rm {\bf Hull}}(\mathcal{C})\right\}
\end{equation} if $\mathcal{C}$ is not empty. Otherwise, we consider $\beta^{2}(M)=0.$ There are many $4$-manifolds $M^4$ for which $\beta^{2}(M)>0.$ With these definitions, LeBrun showed that a compact oriented $4$-manifold with $b_{2}^{+}(M)\geq 2$ satisfies
\begin{equation}
\int_{M}s^2 dV_{g}\geq 32\pi^{2}\beta^{2}(M).
\end{equation} In addition, if $\mathcal{C}$ is not empty, we have
\begin{equation}
Y(M)\leq -4\pi\sqrt{2\beta^{2}(M)}.
\end{equation} We refer to \cite{Donaldson,KotschickM,{LeBrunP}} for a general discussion on monopole classes.

Next, we shall show a relation between the LeBrun's invariant $\beta^{2}(M)$ and the modified scalar curvature $\frac{1}{2}\big(K_1^\perp+\frac{s}{12}\big)$ as well as its corresponding modified Yamabe invariant $Y_{K_{1}^{\perp},s}(M).$ To be precise, we have established the following result.

\begin{theorem}
\label{thmmono}
Let $M^4$ be a 4-dimensional compact oriented manifold. Suppose that $\mathcal{C}$ is not empty and $b_{2}^{+}(M)\geq 2.$ Then, for any metric $g$ on $M^4,$ we have:
\begin{equation}
\int_{M}\Big[\frac{1}{2}\big(K_{1}^{\perp}+\frac{s}{12}\big)\Big]^{2}dV_{g}\ge \frac{\pi^2}{4}\beta^{2}(M).
\end{equation} In particular, the equality is attained by $\Bbb{C}\mathcal{H}^{2}/\Gamma.$ In addition, if $Y(M)<0,$ then
\begin{equation}
|Y_{K_{1}^{\perp},s}(M)|^{2}\ge 36\pi^2\beta^{2}(M).
\end{equation}
\end{theorem}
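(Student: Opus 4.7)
The plan is to combine LeBrun's Seiberg-Witten curvature estimate $\int_M s_g^2\,dV_g\ge 32\pi^2\beta^2(M)$, already recalled in the introduction, with the elementary pointwise comparison between the ``modified scalar curvature'' $\tilde s := 12F = 6K_1^\perp + s/2$ and the ordinary scalar curvature $s$. First I would establish the pointwise inequality $K_1^\perp(x)\le s(x)/12$: given any orthonormal frame $\{e_1,\ldots,e_4\}$ of $T_xM$, the three coordinate orthogonal-plane pairs $(e_1\wedge e_2,\,e_3\wedge e_4)$, $(e_1\wedge e_3,\,e_2\wedge e_4)$ and $(e_1\wedge e_4,\,e_2\wedge e_3)$ have biorthogonal curvatures summing to $\tfrac{1}{2}\sum_{i<j}K(e_i\wedge e_j)=s/4$, so the minimum of these three is at most $s/12$. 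A fortiori $K_1^\perp(x)\le s(x)/12$, i.e.\ $\tilde s\le s$ pointwise on $M^4$.

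Next, observe that wherever $s\le 0$ one has $\tilde s\le s\le 0$ and hence $|\tilde s|\ge|s|$, so $\tilde s^2\ge s^2$ pointwise. Since by hypothesis $\mathcal{C}\neq\emptyset$ and $b_2^+\ge 2$ force $Y(M)\le -4\pi\sqrt{2\beta^2(M)}<0$ (LeBrun's bound recalled above), one may pass within any conformal class to a Yamabe representative with $s$ equal to a nonpositive constant; the pointwise inequality $\tilde s^2\ge s^2$ then holds everywhere, and integrating together with the LeBrun estimate gives $144\int_M F^2\,dV_g=\int_M\tilde s^2\,dV_g\ge \int_M s^2\,dV_g\ge 32\pi^2\beta^2(M)$. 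A refined LeBrun inequality coupling $s$ with the self-dual Weyl tensor, together with the sharper pointwise bound $\tilde s\le s-\sqrt{6}\,|W^+|$ (valid at points where the lowest eigenvalue of $W^+$ saturates its Cauchy-Schwarz bound, in particular on the K\"ahler locus), then upgrades the constant to the claimed $\tfrac{\pi^2}{4}\beta^2(M)$.

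For the equality case: on $\Bbb{C}\mathcal{H}^{2}/\Gamma$, by Singer-Thorpe one has $K^\perp=K$ (since the space is Einstein), and the minimum sectional curvature is attained on holomorphic $2$-planes, giving $K_1^\perp=s/6$ and thus $F=s/8$; combined with LeBrun's sharp K\"ahler identity $\int s^2\,dV=32\pi^2\beta^2(M)$, this yields the claimed equality. For the Yamabe-invariant consequence, since $\tilde s$ is a modified scalar curvature in the sense of \cite{CDR}, it transforms conformally like $s$ and the associated modified Yamabe problem admits in each class $[g]$ a representative with constant $\tilde s$; normalizing $\mathrm{Vol}=1$ then gives $|Y_{K_1^\perp,s}(M,[g])|^{2}=\int_M\tilde s^{2}\,dV_g$, and applying the first inequality in every conformal class and passing to the infimum over $[g]$ (with $Y(M)<0$) yields $|Y_{K_1^\perp,s}(M)|^{2}\ge 144\cdot\tfrac{\pi^2}{4}\beta^2(M)=36\pi^2\beta^2(M)$. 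The main obstacle is securing the sharp constant $\tfrac{\pi^2}{4}$: the na\"ive chain $\tilde s^2\ge s^2$ combined with the plain LeBrun bound $\int s^2\ge 32\pi^2\beta^2$ yields only $\tfrac{2\pi^2}{9}\beta^2$, so one must either restrict to Yamabe representatives (thereby using $Y(M)<0$) and apply a Weyl-refined Seiberg-Witten estimate, or handle the regions where $s>0$ by an alternative integration-by-parts argument exploiting that $\tilde s-s\le 0$ everywhere.
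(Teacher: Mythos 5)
Your opening observation ($K_1^\perp\le s/12$ pointwise, hence $6K_1^\perp+\tfrac{s}{2}\le s$) is correct, but the argument built on it does not prove the theorem. First, the statement is for \emph{every} metric $g$, while the pointwise comparison $\big(6K_1^\perp+\tfrac{s}{2}\big)^2\ge s^2$ is only valid where $s\le 0$; you arrange $s\le0$ by passing to a Yamabe representative, but $\int_M\big|\tfrac12(K_1^\perp+\tfrac{s}{12})\big|^2dV_g$ is not a conformal invariant, so a bound for that representative does not transfer back to the arbitrary metric $g$ in the statement. Second, even where the comparison holds, you concede it only yields $\tfrac{2\pi^2}{9}\beta^2(M)$, and your proposed upgrade rests on the pointwise inequality $6K_1^\perp+\tfrac{s}{2}\le s-\sqrt6\,|W^+|$, i.e. $3(w_1^++w_1^-)\le-\sqrt6\,|W^+|$, which by $(w_1^+)^2\le\tfrac23|W^+|^2$ holds only in the degenerate case $w_2^+=w_3^+$ (K\"ahler-type points) and is false for a general metric --- exactly the case the theorem must cover. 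So both the ``any metric'' step and the sharp-constant step are genuine gaps. The paper's proof avoids both by following LeBrun's weighted Seiberg--Witten argument: for any smooth positive $f$ the rescaled equations admit a solution (Lemma 3.7 of \cite{LeBrunP}); the Weitzenb\"ock estimate of Lemma \ref{lemM} injects the term $\tfrac23(6K_1^\perp-s)$ in place of $2w_1^+$; two H\"older inequalities together with Proposition 4.5 of \cite{LeBrunP} give $\big(\int_Mf^4dV_g\big)^{1/2}\big(\int_M|K_1^\perp+\tfrac{s}{12}|^3f^{-2}dV_g\big)^{2/3}\ge2\pi^2\beta^2(M)$ for every $f$; and letting $f_k^2\downarrow\tfrac12|K_1^\perp+\tfrac{s}{12}|$ yields the stated integral bound for the given metric directly, with no sign normalization and no conformal change.

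Two further points. Your verification of the equality case does not close: on $\Bbb{C}\mathcal{H}^2/\Gamma$ one has $K_1^\perp=\tfrac{s}{6}$, hence $\tfrac12\big(K_1^\perp+\tfrac{s}{12}\big)=\tfrac{s}{8}$, and with $\int_Ms^2dV_g=32\pi^2\beta^2(M)$ this gives $\int_M\big(\tfrac{s}{8}\big)^2dV_g=\tfrac{\pi^2}{2}\beta^2(M)$, not $\tfrac{\pi^2}{4}\beta^2(M)$; so the computation you sketch does not produce the asserted equality and signals that the normalizations must be tracked explicitly rather than asserted. Relatedly, your identification $|Y_{K_1^\perp,s}(M,[g])|^2=\int_M\big(6K_1^\perp+\tfrac{s}{2}\big)^2dV_g$ differs by a factor of $144$ from the convention $|Y_{K_1^\perp,s}(M)|^2=\inf_{g}\int_M\big|\tfrac12(K_1^\perp+\tfrac{s}{12})\big|^2dV_g$ used in the proof of Theorem \ref{thmT}; the passage from the curvature-integral bound to the Yamabe-invariant bound is carried out in the paper via Proposition 3 of \cite{CDR} (Itoh's Lemma 2.6) under the hypothesis $Y(M)<0$, and the ``multiply by $144$'' shortcut does not follow from your first inequality without fixing and justifying that convention.
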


\subsection{Sphere Theorem for Submanifolds}

It is very interesting to investigate curvature and topology of submanifolds of spheres and Euclidean spaces. In this context, in 1973, Lawson and Simons \cite{LSi}, by means of nonexistence for stable currents on compact submanifolds of a sphere, obtained a criterion for the vanishing of the homology groups of compact submanifolds of spheres. In particular, they showed the following result.

\begin{theorem}[Lawson-Simons, \cite{LSi}]
\label{thmLSi} Let $M^n$ be an $n(\ge 4)$-dimensional oriented compact submanifold in the unit sphere $\Bbb{S}^{n+p}.$
\begin{enumerate}
\item If $n=4,$ and the second fundamental form $\alpha$ of $M^4$ satisfies $\| \alpha \|^{2}<3,$ then $M^4$ is a homotopic sphere.
\item If $n\geq 5,$ and the second fundamental form $\alpha$ of $M^n$ satisfies $\| \alpha \|^{2}<2\sqrt{n-1},$ then $M^n$ is homeomorphic to a sphere.
\end{enumerate}
\end{theorem}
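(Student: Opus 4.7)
The plan is to combine the Lawson--Simons nonexistence principle for stable integral currents with the resolution of the generalized Poincar\'e conjecture in the relevant dimensions. Both assertions reduce to proving $H_k(M;\mathbb{Z})=0$ for every $1\le k\le n-1$.

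First I would recall the Lawson--Simons pointwise criterion. For a compact submanifold $M^n\subset\Bbb{S}^{n+p}$, if at every $x\in M$ and every orthonormal basis $\{e_1,\ldots,e_n\}$ of $T_xM$ the quantity
\[
P_k(x)=\sum_{i=1}^{k}\sum_{j=k+1}^{n}\Bigl(2\|\alpha(e_i,e_j)\|^{2}-\langle\alpha(e_i,e_i),\alpha(e_j,e_j)\rangle\Bigr)
\]
is strictly smaller than $k(n-k)$ (the $k(n-k)$ coming from the constant ambient sectional curvature $1$), then $M$ admits no nonzero stable integral $k$-current; consequently $H_k(M;\mathbb{Z})=0$.

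Second, I would estimate $P_k(x)$ purely in terms of $\|\alpha\|^{2}$. Using $\sum_{i,j=1}^{n}\|\alpha(e_i,e_j)\|^{2}=\|\alpha\|^{2}$ together with a Cauchy--Schwarz bound on the cross-terms $\langle\alpha(e_i,e_i),\alpha(e_j,e_j)\rangle$, one obtains an inequality
\[
P_k(x)\le C(n,k)\,\|\alpha(x)\|^{2}
\]
for an explicit combinatorial constant $C(n,k)$. The ratio $C(n,k)/[k(n-k)]$ is maximized at the balanced split $k=2$ when $n=4$, producing the pinching $\|\alpha\|^{2}<3$; for $n\ge 5$ the worst split is $k=1$ (equivalently $k=n-1$), and balancing the single long edge against Cauchy--Schwarz supplies the factor $\sqrt{n-1}$, giving the pinching $\|\alpha\|^{2}<2\sqrt{n-1}$.

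Having obtained $H_k(M;\mathbb{Z})=0$ for $1\le k\le n-1$ and using orientability, $M^{n}$ is an integral homology sphere; Hurewicz and Whitehead then identify $M^{n}$ with a homotopy $n$-sphere, proving~(1). For (2), Smale's resolution of the generalized Poincar\'e conjecture in dimensions $n\ge 5$ upgrades the homotopy equivalence to a homeomorphism with $\Bbb{S}^{n}$.

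The principal obstacle is the second step: a naive term-by-term Cauchy--Schwarz estimate of $P_k(x)$ yields significantly weaker pinching constants. To reach the sharp values $3$ and $2\sqrt{n-1}$, one must handle the mean-curvature components $\sum_i\alpha(e_i,e_i)$ separately from the off-diagonal components and then optimize over the split $k$ versus $n-k$. The case distinction $n=4$ vs.\ $n\ge 5$ in the statement reflects precisely which $k$ realizes the extremal combinatorial ratio in this optimization.
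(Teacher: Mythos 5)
This statement is not proved in the paper at all: Theorem~\ref{thmLSi} is quoted as a classical result of Lawson--Simons \cite{LSi} and is only used later as background, so your sketch can only be measured against the original argument, whose overall strategy (instability of stable integral $k$-currents under averaged conformal deformations of $\Bbb{S}^{n+p}$, Federer--Fleming to convert ``no stable $k$-currents'' into $H_k(M;\Bbb{Z})=0$, then Hurewicz--Whitehead and, for $n\ge 5$, the generalized Poincar\'e conjecture) you have correctly identified. However, two steps of your proposal are genuinely deficient. The first is topological: you pass from ``integral homology sphere'' to ``homotopy sphere'' via Hurewicz and Whitehead, but Hurewicz requires $\pi_1(M)=0$, and an integral homology sphere need not be simply connected; $H_1(M;\Bbb{Z})=0$ gives only that $\pi_1$ has trivial abelianization. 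In the Lawson--Simons scheme the fundamental group is killed by the same mechanism as the homology: if $\pi_1(M)\neq 1$, a closed geodesic of least length in a nontrivial free homotopy class is a stationary $1$-current/varifold with nonnegative second variation under all ambient deformations, contradicting the nonexistence of stable $1$-currents guaranteed by the $k=1$ case of the pinching. Without this (or some substitute) your proof of both items collapses; note that the paper's own Theorems~\ref{thmSub} and \ref{thmSub2} must \emph{assume} finite fundamental group precisely because homology vanishing alone does not control $\pi_1$.

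The second problem is the combinatorial step, which you defer and moreover describe incorrectly. For a single shape operator $B$ one estimates the off-diagonal part of $P_k$ by its own norm (constant $1$) and the trace part by Cauchy--Schwarz, giving $P_k\le\max\bigl\{1,\tfrac{1}{2}\sqrt{k(n-k)}\bigr\}\,\|\alpha\|^{2}$, hence the threshold $\min\bigl\{k(n-k),\,2\sqrt{k(n-k)}\bigr\}$ for each $k$; minimizing over $1\le k\le n-1$ the binding split is $k=1$ (equivalently $k=n-1$) in \emph{all} dimensions, yielding $\min\{n-1,\,2\sqrt{n-1}\}$, which equals $3$ for $n=4$ and $2\sqrt{n-1}$ for $n\ge 5$. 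Thus the $n=4$ constant $3$ does not come from the balanced split $k=2$ (that split gives the weaker requirement $\|\alpha\|^{2}<4$); the dichotomy in the hypotheses reflects which of the two terms in the $k=1$ estimate dominates, while the dichotomy in the conclusions (homotopy sphere versus homeomorphism with $\Bbb{S}^{n}$) reflects that the topological Poincar\'e conjecture was available in 1973 only for $n\ge 5$, not any feature of the optimization over $k$. As written, your sketch neither derives the constants $3$ and $2\sqrt{n-1}$ nor explains them correctly.
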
 Afterward, Leung \cite{Leung} and Xin \cite{Xin} were able to extend the results obtained by Lawson and Simons to compact submanifolds of Euclidean spaces. In 2001, inspired by ideas developed in  \cite{LSi,Leung} and \cite{Xin}, Asperti and Costa \cite{AC} obtained an estimate for the Ricci curvature of submanifolds of a space form which improves Leung's estimates. As a consequence, Asperti and Costa obtained a new criterion for the vanishing of the homology groups of compact submanifolds of spheres and Euclidean spaces. In 2009, Xu and Zhao \cite{XZ} investigated the topological and differentiable structures of submanifolds by imposing certain conditions on the second fundamental form. In the work \cite{GX}, Gu and Xu used the convergence results obtained by Hamilton and Brendle as well as Lawson-Simons-Xin formulae to obtain a differentiable sphere theorem for submanifolds in space forms. Similar result was obtained by Andrews-Baker \cite{Andrews} making use of the mean curvature flow. For  comprehensive references on such a subject, we indicate, for instance \cite{AC,GX,LSi,Leung} and \cite{XZ}.

Before to state our next results let us fix notation. We shall denote by $f: M^{n}\to \mathcal{Q}_{c}^{n+m}$ an isometric immersion of a connected n-dimensional compact Riemannian manifold $M^n$ into a complete, simply connected $(n+m)$-dimensional manifold $\mathcal{Q}_{c}^{n+m}$ with constant sectional curvature $c.$ We also denote by $T_{p}M$ the tangent space of $M^n$, at each point $p\in M^n,$ and $\big(T_{p}M\big)^{\perp}$ stands for the normal space of the immersion $f$ at  $p.$ Furthermore, $\vec{H}$ and $\alpha: T_{p}M\times T_{p}M\to \big(T_{p}M\big)^{\perp}$ stand for the mean curvature vector and the second fundamental form of the immersion, respectively. We adopt the following convention:
\vspace{0.3cm}

{\it If $p\in M^n$ is such that $\vec{H}(p)\neq 0,$ then $\lambda_{1}\leq ...\leq \lambda_{n}$ denote the eigenvalues of the Weingarten operator $A_{\xi_{1}}$ with $\xi_{1}=\frac{1}{H}\vec{H}(p),$ where $H=\|\vec{H}\|$ is the length of the mean curvature vector. Otherwise, if $\vec{H}(p)= 0,$ we then pick  $\lambda_{i}=0$ for $1\leq i\leq n,$ and $\xi_{1}$ any unit vector normal to $M^n$ at the point $p.$}

\vspace{0.3cm}

With this convention, we recall a result by Asperti and Costa \cite{AC}, which is crucial for our purposes.

\begin{theorem}[Asperti-Costa, \cite{AC}]
\label{thmAC}
Let $f: M^{n}\to \mathcal{Q}_{c}^{n+m}$ be an isometric immersion, where $M^n$ is a compact oriented manifold and $c\geq 0.$ Suppose that
\begin{equation}
\label{estAC}
\|\alpha\|^{2}<\frac{n^{2}H^{2}}{(n-p)}+\frac{n(n-2p)H\lambda_{1}}{(n-p)}+nc
\end{equation} holds on $M^n$ for some integer $p$ satisfying $2\leq p\leq \frac{n}{2}.$ Then the k-th homology group $H_{k}(M,\Bbb{Z})=0,$ for $p\leq k\leq n-p.$
\end{theorem}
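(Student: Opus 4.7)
The natural strategy is to invoke the Lawson--Simons--Xin nonexistence criterion for stable integral currents on submanifolds of space forms of nonnegative curvature. Combined with Federer's constancy theorem from geometric measure theory, this criterion reduces the vanishing of $H_{k}(M,\mathbb{Z})$ for $p\leq k\leq n-p$ to a pointwise inequality of the form
\[
\sum_{i=1}^{p}\sum_{j=p+1}^{n}\Big(2\|\alpha(e_{i},e_{j})\|^{2} - \langle\alpha(e_{i},e_{i}),\alpha(e_{j},e_{j})\rangle\Big) < p(n-p)c
\]
holding at every point $x\in M$ and for \emph{every} orthonormal basis $\{e_{1},\ldots,e_{n}\}$ of $T_{x}M$. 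The whole task reduces to showing that the bound on $\|\alpha\|^{2}$ in the hypothesis is strong enough to force this pointwise inequality for every orthonormal basis simultaneously.

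To handle the estimate, I would fix a point $x\in M$, choose an orthonormal normal frame $\{\xi_{\mu}\}_{\mu=1}^{m}$ with $\xi_{1}=\vec{H}/H$ when $\vec{H}(x)\neq 0$, and write $h_{ij}^{\mu}=\langle\alpha(e_{i},e_{j}),\xi_{\mu}\rangle$. The structural identities $\sum_{i} h_{ii}^{1}=nH$ and $\sum_{i} h_{ii}^{\mu}=0$ for $\mu\geq 2$ turn the Lawson--Simons functional into a quadratic form in the $h_{ij}^{\mu}$. The first-part contribution is handled via
\[
2\sum_{i\leq p,\,j>p}\|\alpha(e_{i},e_{j})\|^{2} \;=\; \|\alpha\|^{2} - \sum_{i,j\leq p}\|\alpha(e_{i},e_{j})\|^{2} - \sum_{i,j>p}\|\alpha(e_{i},e_{j})\|^{2} \;\leq\; \|\alpha\|^{2},
\]
while the second-part contribution $-\sum_{i\leq p,\,j>p}\langle\alpha(e_{i},e_{i}),\alpha(e_{j},e_{j})\rangle$ decomposes according to $\mu$: for $\mu=1$ it becomes $A(nH-A)$ with $A=\sum_{i\leq p}h_{ii}^{1}$, and for $\mu\geq 2$ it contributes the nonnegative term $A_{\mu}^{2}$.

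The core of the argument is an algebraic inequality bounding the left-hand side of the Lawson--Simons criterion above by
\[
\|\alpha\|^{2} \;-\; \frac{n^{2}H^{2}}{n-p} \;-\; \frac{n(n-2p)H\lambda_{1}}{n-p} \;-\; nc \;+\; p(n-p)c.
\]
Once this is established, the theorem's hypothesis forces the whole right-hand side to be strictly negative, hence $Q<p(n-p)c$ at every point and for every basis, and Lawson--Simons--Xin delivers the vanishing of $H_{k}(M,\mathbb{Z})$ in the asserted range. The appearance of $\lambda_{1}$ arises from splitting $nH=\sum_{i\leq p}h_{ii}^{1}+\sum_{i>p}h_{ii}^{1}$ and applying a weighted Cauchy--Schwarz together with the bound $h_{ii}^{1}\geq\lambda_{1}$, so that the cross term $H\lambda_{1}$ enters with the coefficient $\tfrac{n(n-2p)}{n-p}$.

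The main obstacle, and the technical heart of the proof, is precisely this sharp algebraic inequality. One must simultaneously: (i) partition the diagonal entries $h_{ii}^{1}$ between the index blocks $\{1,\ldots,p\}$ and $\{p+1,\ldots,n\}$ so that Cauchy--Schwarz yields exactly the coefficient $\tfrac{1}{n-p}$ in front of $H^{2}$; (ii) exploit $h_{ii}^{1}\geq\lambda_{1}$ to produce the correct $H\lambda_{1}$ term; and (iii) verify that the remaining normal directions $\mu\geq 2$, for which $\sum_{i}h_{ii}^{\mu}=0$, contribute only with the right sign. Controlling all these pieces uniformly in the orthonormal basis $\{e_{1},\ldots,e_{n}\}$ is the delicate point; the hypothesis $c\geq 0$ is what keeps the curvature term $p(n-p)c-nc$ on the correct side for $2\leq p\leq n/2$, so that the final comparison goes through.
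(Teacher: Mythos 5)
You should first note that the paper contains no proof of Theorem \ref{thmAC}: it is quoted from Asperti--Costa \cite{AC} and used as a black box (together with Lemma 2.2 of \cite{AC}) in the proofs of Theorems \ref{thmSub} and \ref{thmSub2}. Measured against the argument in \cite{AC}, your overall route is the right one: the Lawson--Simons criterion for $c>0$ and Xin's extension for $c=0$, combined with the existence of homologically mass-minimizing currents (the relevant input is Federer--Fleming existence of minimizers rather than the constancy theorem), reduce the vanishing statement to a pointwise inequality on the second-fundamental-form integrand. Two small caveats: the criterion has to be verified for every degree $k$ with $p\le k\le n-p$, not only for $k=p$ as your reduction displays, and vanishing of $H_p$ and $H_{n-p}$ alone does not imply vanishing in the intermediate degrees.

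The genuine gap is that the decisive algebraic estimate is asserted rather than proved, and the partial steps you do give cannot be completed to it. With $h^{\mu}_{ij}=\langle\alpha(e_i,e_j),\xi_\mu\rangle$ and $A_\mu=\sum_{i\le p}h^{\mu}_{ii}$, one has the exact identity
\begin{equation*}
Q:=\sum_{i\le p}\sum_{j>p}\Big(2\|\alpha(e_i,e_j)\|^2-\langle\alpha(e_i,e_i),\alpha(e_j,e_j)\rangle\Big)
=2\sum_{\mu}\sum_{i\le p<j}(h^{\mu}_{ij})^2+A_1^2-nHA_1+\sum_{\mu\ge2}A_\mu^2,
\end{equation*}
so the $\mu=1$ block contributes $-A_1(nH-A_1)$ (not $A_1(nH-A_1)$ as you wrote), and the traceless directions $\mu\ge2$ contribute $+A_\mu^2$, i.e.\ with the \emph{unfavorable} sign, contrary to your item (iii). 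Your step $2\sum_{i\le p<j}\|\alpha(e_i,e_j)\|^2\le\|\alpha\|^2$ discards exactly the diagonal blocks $\sum_{i,j\le p}$ and $\sum_{i,j>p}$, which are the only terms available to absorb $A_1^2-nHA_1$ and $\sum_{\mu\ge2}A_\mu^2$ and to produce the coefficients $\tfrac{n^2}{n-p}$ and $\tfrac{n(n-2p)}{n-p}$ after completing squares with $h^{1}_{ii}\ge\lambda_1$. Concretely, take $n=4$, $p=2$, $m=1$, $H=0$ and $h=\mathrm{diag}(a,a,-a,-a)$: then $Q=4a^2$ and your claimed target bound also equals $4a^2$ (so the target inequality itself is tight and plausibly correct), but the route ``drop the diagonal blocks and keep $A_1^2-nHA_1+\sum_{\mu\ge2}A_\mu^2$'' only yields $Q\le 8a^2$, which does not give $Q<p(n-p)c=4c$ under the hypothesis $\|\alpha\|^2=4a^2<4c$ once $a^2$ is close to $c$. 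So the heart of the theorem, the sharp pointwise estimate actually carried out in \cite{AC}, is missing from your proposal, and the bookkeeping you set up would have to be reorganized (retaining the diagonal-block squares) before it could produce it.
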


Here, motivated by works \cite{Andrews,AC} and \cite{LSi}, we going to provide topological sphere theorems for compact submanifolds of spheres and Euclidean spaces, provided that the full norm of the second fundamental form is bounded by a multiple of the length of the mean curvature vector. More precisely, we may announce our next result as follows.

\begin{theorem}
\label{thmSub}
Let $M^4$ be a connected 4-dimensional oriented compact submanifold of $\mathcal{Q}_{c}^{4+m}$ with $c\geq 0.$ Then we have:

\begin{equation}
4K_{1}^{\perp}\ge -\|\alpha\|^{2}+4\big(2H^{2}+c\big).
\end{equation} In particular, if $M^4$ has finite fundamental group and $\|\alpha\|^{2}<4\big(2H^{2}+c\big),$ then $M^4$ is homeomorphic to a sphere $\Bbb{S}^4.$
\end{theorem}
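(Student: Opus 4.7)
The plan is to deduce the pointwise inequality from the Gauss equation via a short algebraic rearrangement, then couple the strict form with Theorem~\ref{thmAC} and a covering argument to secure the sphere conclusion.

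For the curvature inequality I fix $x\in M^4$ and a plane $P\subset T_xM$ with orthonormal basis $\{e_1,e_2\}$, extended to an adapted orthonormal frame $\{e_1,\ldots,e_4\}$ with $P^\perp=\mathrm{span}\{e_3,e_4\}$. Writing $\alpha_{ij}=\alpha(e_i,e_j)$, the Gauss equation into the constant-curvature target $\mathcal{Q}_c^{4+m}$ yields
\[
K(P)+K(P^\perp)=2c+\langle\alpha_{11},\alpha_{22}\rangle+\langle\alpha_{33},\alpha_{44}\rangle-\|\alpha_{12}\|^2-\|\alpha_{34}\|^2.
\]
The key algebraic move is to replace each inner product through $2\langle\alpha_{ii},\alpha_{jj}\rangle=\|\alpha_{ii}+\alpha_{jj}\|^2-\|\alpha_{ii}\|^2-\|\alpha_{jj}\|^2$ and to expand $\|\alpha\|^2=\sum_i\|\alpha_{ii}\|^2+2\sum_{i<j}\|\alpha_{ij}\|^2$. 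A short bookkeeping collapses everything to
\begin{equation*}
4K^\perp(P)+\|\alpha\|^2-4c=\|\alpha_{11}+\alpha_{22}\|^2+\|\alpha_{33}+\alpha_{44}\|^2+2\!\!\!\sum_{i\in\{1,2\},\, j\in\{3,4\}}\!\!\!\|\alpha_{ij}\|^2.
\end{equation*}
Discarding the nonnegative last sum and applying $\|u\|^2+\|v\|^2\ge \tfrac12\|u+v\|^2$ to $u=\alpha_{11}+\alpha_{22}$, $v=\alpha_{33}+\alpha_{44}$, $u+v=4\vec H$, gives $4K^\perp(P)\ge -\|\alpha\|^2+8H^2+4c$. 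Taking the infimum over planes at $x$ produces the pointwise bound on $4K_1^\perp$.

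For the topological half, notice that $\|\alpha\|^2<4(2H^2+c)=8H^2+4c$ is precisely hypothesis (\ref{estAC}) of Theorem~\ref{thmAC} with $n=4$, $p=2$, and therefore $H_2(M^4,\Bbb{Z})=0$. Now lift to the universal cover $\pi\colon\tilde M\to M^4$, which is a closed oriented $4$-manifold by finiteness of $\pi_1(M)$. The composition $f\circ\pi\colon\tilde M\to\mathcal{Q}_c^{4+m}$ is an isometric immersion whose second fundamental form and mean curvature agree pointwise with those of $f$, so the same strict bound holds on $\tilde M$; reapplying Theorem~\ref{thmAC} gives $H_2(\tilde M,\Bbb{Z})=0$. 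Simple connectedness of $\tilde M$ forces $H_1(\tilde M)=0$, and Poincar\'e duality then forces $H_3(\tilde M)=0$, so $\tilde M$ is a simply connected integral homology $4$-sphere. Freedman's classification promotes this to $\tilde M$ homeomorphic to $\Bbb{S}^4$. Finally, any nontrivial deck transformation would be an orientation-preserving, fixed-point-free self-homeomorphism of $\Bbb{S}^4$, which is ruled out by the Lefschetz fixed-point theorem (Lefschetz number $=2$ on even-dimensional spheres). Hence $M^4\cong\tilde M$ is homeomorphic to $\Bbb{S}^4$.

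The step I anticipate to be the main obstacle is spotting the correct regrouping in the Gauss computation: one must recognize that the two diagonal pair-sums $\alpha_{11}+\alpha_{22}$ and $\alpha_{33}+\alpha_{44}$ combine to $4\vec H$ so that their joint squared norms are bounded below by $8H^2$, while the six off-diagonal terms reorganize so that only the four $\alpha_{ij}$ with $i\in\{1,2\}$, $j\in\{3,4\}$ remain, and these carry a favorable sign and may be dropped. Once this inequality is in hand the Asperti--Costa application is automatic, and the Freedman--Lefschetz conclusion is a standard algebraic-topological wrap-up.
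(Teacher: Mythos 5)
Your proposal is correct, and both halves land exactly where the paper does, but each half is organized differently enough to be worth noting. For the curvature bound, the paper follows the Asperti--Costa style: it estimates $Ric(v_1)+Ric(v_2)$ and $Ric(v_3)+Ric(v_4)$ from the Gauss equation via a completion of squares in the mean-curvature direction $\xi_1$, sums to get $4K^\perp(P)\geq s-8H^2-8c$, and then substitutes $s=12c+16H^2-\|\alpha\|^2$. You instead bypass Ricci altogether and record the exact identity $4K^\perp(P)+\|\alpha\|^2-4c=\|\alpha_{11}+\alpha_{22}\|^2+\|\alpha_{33}+\alpha_{44}\|^2+2\sum_{i\in\{1,2\},j\in\{3,4\}}\|\alpha_{ij}\|^2$, then use $\|u\|^2+\|v\|^2\geq\tfrac12\|u+v\|^2$ with $u+v=4\vec H$; this is a shorter computation and, as a bonus, makes the equality discussion transparent (relevant to the rigidity statements in Theorem \ref{thmSub2}). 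For the topological conclusion, the paper applies Theorem \ref{thmAC} with $p=2$ to get $H_2(M,\Bbb{Z})=0$ and then simply cites Lemma 2.2 of \cite{AC}; you reprove that lemma: pass to the compact universal cover, apply Theorem \ref{thmAC} to the lifted immersion (correctly noting that the second fundamental form is unchanged, so the strict pinching persists and $H_2(\tilde M,\Bbb{Z})=0$ rather than trying to deduce it from $H_2(M,\Bbb{Z})=0$), conclude $\tilde M$ is a simply connected integral homology $4$-sphere, invoke Freedman, and kill nontrivial deck transformations with the Lefschetz fixed-point theorem using orientability. This is a valid, self-contained substitute for the cited lemma, at the cost of invoking Freedman's classification explicitly.
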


The estimate obtained in Theorem \ref{thmSub} improves the estimate stated in Theorem 4 in \cite{GX}. Moreover, it can be seen as a generalization, in the topological sense, of the same theorem. Notice that the condition $\|\alpha\|^{2}<\frac{n^{2}H^{2}}{n-1}+2c$ used in Theorem 4 in \cite{GX} implies that $M^4$ has nonnegative sectional curvature.

In the sequel, as an application of Theorems \ref{thmAC} and \ref{thmSub} we have the following result.

\begin{theorem}
\label{thmSub2}
Let $M^4$ be a $4$-dimensional oriented compact submanifold in the unit sphere $\Bbb{S}^{4+m}$, $m\ge 1.$

\begin{enumerate}
\item If $\|\alpha\|^{2}<4,$ then $M^4$ has positive biorthogonal curvature. In addition, if $M^4$ has finite fundamental group, then $M^4$ is homeomorphic to a sphere $\Bbb{S}^4.$
\item If $\|\alpha\|^{2}\leq 4,$ then $M^4$ has nonnegative biorthogonal curvature. Moreover, if for every point of $M^4$ some biorthogonal curvature vanishes, then $M^4$ is a minimal submanifold of $\Bbb{S}^{n}$. In addition, if $m=1$, then $M^4$ must be $\Bbb{S}_{c_{1}}^{2}\times \Bbb{S}_{c_{2}}^{2}.$
\end{enumerate}
\end{theorem}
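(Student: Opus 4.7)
The entire theorem is going to follow from the pointwise estimate established in Theorem~\ref{thmSub}, specialized to $c=1$. That estimate reads
\begin{equation}\label{keyineqpl}
4K_{1}^{\perp}\ \ge\ -\|\alpha\|^{2}+8H^{2}+4
\end{equation}
at every point of any compact oriented submanifold $M^{4}\subset\Bbb{S}^{4+m}$, and the plan is simply to plug the two hypotheses on $\|\alpha\|^{2}$ into \eqref{keyineqpl} and read off the biorthogonal, topological and rigidity consequences in turn.

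For item (1), if $\|\alpha\|^{2}<4$ then $-\|\alpha\|^{2}+4>0$, so \eqref{keyineqpl} yields $4K_{1}^{\perp}>8H^{2}\ge 0$, proving that $M^{4}$ has strictly positive biorthogonal curvature. Moreover, $\|\alpha\|^{2}<4\le 4(2H^{2}+1)=4(2H^{2}+c)$, and so the very hypothesis of the homeomorphism statement in Theorem~\ref{thmSub} is satisfied; combined with the assumption of finite fundamental group, that theorem immediately gives $M^{4}\approx\Bbb{S}^{4}$.

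For item (2), the inequality $\|\alpha\|^{2}\le 4$ makes \eqref{keyineqpl} read $4K_{1}^{\perp}\ge 8H^{2}\ge 0$, so $K_{1}^{\perp}\ge 0$. If at each point some biorthogonal curvature vanishes, then, $K_{1}^{\perp}$ being already nonnegative, one has $K_{1}^{\perp}\equiv 0$, and the chain
\begin{equation*}
0\ =\ 4K_{1}^{\perp}\ \ge\ -\|\alpha\|^{2}+8H^{2}+4\ \ge\ 8H^{2}\ \ge\ 0
\end{equation*}
is an equality everywhere, forcing $H\equiv 0$ and $\|\alpha\|^{2}\equiv 4$. Hence $M^{4}$ is a minimal submanifold of the ambient sphere.

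When $m=1$, the condition $\|\alpha\|^{2}\equiv 4=\dim M$ on a compact minimal hypersurface of $\Bbb{S}^{5}$ triggers the classical Chern--do~Carmo--Kobayashi rigidity theorem, identifying $M^{4}$ with a Clifford torus $\Bbb{S}^{k}(\sqrt{k/4})\times\Bbb{S}^{4-k}(\sqrt{(4-k)/4})$ for some $k\in\{1,2,3\}$. The only nontrivial remaining step, and hence the real obstacle of the proof, is to single out $k=2$: a direct application of the product formula for the curvature tensor shows that on $\Bbb{S}^{1}\times\Bbb{S}^{3}$ the bookkeeping of the $e_{1}$-components of an orthonormal basis of any 2-plane $P$ forces $K(P)+K(P^{\perp})=1/r_{2}^{2}$ for \emph{every} such $P$, so $K^{\perp}\equiv 1/(2r_{2}^{2})>0$ everywhere and no biorthogonal curvature vanishes (and similarly for $k=3$); whereas on $\Bbb{S}^{2}\times\Bbb{S}^{2}$ the plane spanned by one vector from each factor manifestly has $K=K^{\perp}=0$. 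Thus only $k=2$ is consistent with $K_{1}^{\perp}\equiv 0$, giving $M^{4}=\Bbb{S}^{2}_{c_{1}}\times\Bbb{S}^{2}_{c_{2}}$ as claimed.
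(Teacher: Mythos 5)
Your proof is correct and follows essentially the same route as the paper: the pointwise estimate of Theorem~\ref{thmSub} with $c=1$ gives both curvature assertions, the homeomorphism claim is exactly the last statement of Theorem~\ref{thmSub} (i.e.\ Theorem~\ref{thmAC} plus Lemma 2.2 of Asperti--Costa), and the equality analysis forcing $H\equiv 0$ and $\|\alpha\|^{2}\equiv 4$ before invoking Chern--do Carmo--Kobayashi is the paper's argument as well. The only difference is that you explicitly rule out the other Clifford hypersurface $\Bbb{S}^{1}\times\Bbb{S}^{3}$ by checking that $K(P)+K(P^{\perp})=1/r_{2}^{2}>0$ for every plane there (a correct computation, consistent with that product being conformally flat with positive scalar curvature), a step the paper leaves implicit when it passes directly to $\Bbb{S}^{2}_{c_{1}}\times\Bbb{S}^{2}_{c_{2}}$; filling it in is a genuine, if small, improvement in completeness.
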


This last theorem can be seen as an improvement of Theorem \ref{thmLSi} in dimension four. Moreover, the result obtained in the second item of Theorem \ref{thmSub2} generalizes the main result in \cite{Leung2}.

\section{Background}
\label{prel}
Throughout this section we review some basic facts and notation that will be useful for the establishment of the desired results.

\subsection{Four-Manifolds} As it was previously remarked 4-manifolds display fascinating and peculiar features. Indeed, many peculiar features are directly attributable the fact that the bundle of $2$-forms on a four-dimensional oriented Riemannian manifold can be invariantly decomposed as a direct sum
\begin{equation}
\label{piu}
\Lambda^2=\Lambda^{+}\oplus\Lambda^{-},
\end{equation}  where $\Lambda^{\pm}$ is the $(\pm 1)$-eigenspace of the Hodge star operator. The decomposition (\ref{piu}) is conformally invariant. Moreover, it allows us to conclude that the Weyl tensor $W$ is an endomorphism of $\Lambda^2=\Lambda^{+} \oplus \Lambda^{-}$ such that $W = W^+\oplus W^-.$  A manifold is {\it locally conformally flat} if $W=0.$ It is said {\it half-conformally flat} if either $W^{-}=0$ or $W^{+}=0.$ Furthermore, an oriented manifold is {\it self-dual} if $W^{-}=0.$ We point out that the complex projective space $\Bbb{CP}^2$ endowed with Fubini-Study metric shows that,  in real dimension 4, the half-conformally flat condition is really weaker than locally conformally flat condition. Moreover, since the Riemann curvature tensor $\mathcal{R}$ of $M^4$ can be seen as a linear map on $\Lambda^2,$ we have the following decomposition

\begin{equation}
\mathcal{R}=
\left(
  \begin{array}{c|c}
    \\
W^{+} +\frac{s}{12}Id & \mathring{Ric} \\ [0.4cm]\hline\\

    \mathring{Ric}^{*} & W^{-}+\frac{s}{12}Id  \\[0.4cm]
  \end{array}
\right),
\end{equation} where $\mathring{Ric}$ is the traceless Ricci tensor.

Let $H^\pm(M^4,\Bbb{R})$ be the space of positive and negative harmonic 2-forms, respectively. Therefore, the second Betti number $b_2$ of $M^4$ can be written as $b_2 = b_{2}^{+} + b_{2}^{-}$,  where $b_{2}^\pm$ = dim $H^\pm(M^4,\Bbb{R}).$ In particular, the signature of $M^4$ is given by $$\tau(M)=b_{2}^{+} - b_{2}^{-}.$$ By the Hirzebruch signature theorem, it can be expressed as
\begin{equation}
\label{signature} 12\pi^2\tau(M) = \int_{M} \Big(| W^+|^2 - |W^-|^2\Big) dV_{g}.
\end{equation} In addition, by Chern-Gauss-Bonnet formula, the Euler characteristic of $M^4$ can be written as
\begin{equation}
\label{characteristic} 8\pi^2 \chi(M) = \int_{M} \Big(| W^+ |^2 + |W^-|^2 + \frac{s^2}{24} - \frac{1}{2}|\mathring{Ric}|^2\Big) dV_{g}.
\end{equation}

We now  fix a point and diagonalize $W^\pm$ such that $w_i^\pm,$ $1\le i \le 3,$ are their respective eigenvalues. In particular, we remark that the eigenvalues of  $W^\pm$ satisfy
\begin{equation}
\label{eigenvalues}
w_1^{\pm}\leq w_2^{\pm}\leq w_3^{\pm}\,\,\,\,\hbox{and}\,\,\,\,w_1^{\pm}+w_2^{\pm}+w_3^{\pm} = 0.
\end{equation} Easily one verifies from (\ref{eigenvalues}) that
\begin{equation}
\label{estcos}
|W^\pm|^2\leq6(w_1^\pm)^2.
\end{equation}

Next, as it was pointed out in \cite{CR} and \cite{Seaman}, Eq. (\ref{[1.3]}) provides the following useful identity
\begin{equation}
\label{[1.6]}
K_1^\perp = \frac{w_1^+ + w_1^-}{2}+\frac{s}{12}.
\end{equation} For more details see \cite{CR}. 

We also remember that a differential $2$-form $\omega$ on a manifold $M$ gives at each point $p\in M$ a bilinear form on the tangent space $T_{p}M$ given by $\omega:T_{p}M\times T_{p}M\to \Bbb{R}.$ In particular, we say that $\omega$ is non-degenerate if $\omega_{p}$ is non-degenerate for all $p\in M,$ namely, $\omega_{p}(v,w)=0$ for all $w\in T_{p}M$ implies $v=0.$ 

Now, we shall use the classical Weitzenb\"och formula to obtain our first lemma.

\begin{lemma}
\label{lemM}
Let $\alpha^{+}\in H^{+}(M^{4},\Bbb{R})$ be a non-degenerate harmonic 2-form on a compact oriented $4$-manifold $M^4.$ Then we have:
$$\int_{M}|\nabla \alpha^{+}|^{2}dV_{g}\geq \frac{2}{3}\int_{M}\big(6K_{1}^{\perp}-s\big)|\alpha^{+}|^{2}dV_{g}.$$ In particular, the equality holds if and only if $W^{-}=0$ and $\alpha^{+}$ belongs to the smallest eigenspace of $W^{+}.$
\end{lemma}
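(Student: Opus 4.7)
The plan is to apply the Weitzenböck formula for self-dual harmonic $2$-forms on a $4$-manifold. Recall that if $\alpha^+$ is a harmonic self-dual $2$-form, then one has
\begin{equation*}
\nabla^{\ast}\nabla\alpha^{+}=2W^{+}(\alpha^{+})-\frac{s}{3}\alpha^{+}.
\end{equation*}
Taking the $L^{2}$-inner product with $\alpha^{+}$ and integrating by parts on the compact manifold $M^{4}$, I obtain
\begin{equation*}
\int_{M}|\nabla\alpha^{+}|^{2}\,dV_{g}=2\int_{M}\langle W^{+}(\alpha^{+}),\alpha^{+}\rangle\,dV_{g}-\frac{1}{3}\int_{M}s\,|\alpha^{+}|^{2}\,dV_{g}.
\end{equation*}
Here $s$ is not assumed constant, so it stays under the integral.

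Next, viewing $W^{+}$ as a symmetric endomorphism of $\Lambda^{2}_{+}$ with ordered eigenvalues $w_{1}^{+}\leq w_{2}^{+}\leq w_{3}^{+}$, I have the pointwise bound
\begin{equation*}
\langle W^{+}(\alpha^{+}),\alpha^{+}\rangle\geq w_{1}^{+}|\alpha^{+}|^{2},
\end{equation*}
with equality exactly when $\alpha^{+}$ lies in the eigenspace of the smallest eigenvalue of $W^{+}$. This yields
\begin{equation*}
\int_{M}|\nabla\alpha^{+}|^{2}\,dV_{g}\geq\int_{M}\Bigl(2w_{1}^{+}-\frac{s}{3}\Bigr)|\alpha^{+}|^{2}\,dV_{g}.
\end{equation*}

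Now I invoke the identity (\ref{[1.6]}), namely $K_{1}^{\perp}=\frac{w_{1}^{+}+w_{1}^{-}}{2}+\frac{s}{12}$, which gives
\begin{equation*}
2w_{1}^{+}-\frac{s}{3}=4K_{1}^{\perp}-\frac{2s}{3}-2w_{1}^{-}=\frac{2}{3}(6K_{1}^{\perp}-s)-2w_{1}^{-}.
\end{equation*}
Since $W^{-}$ is trace-free with eigenvalues satisfying $w_{1}^{-}\leq w_{2}^{-}\leq w_{3}^{-}$ and $w_{1}^{-}+w_{2}^{-}+w_{3}^{-}=0$ by (\ref{eigenvalues}), we always have $w_{1}^{-}\leq 0$. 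Combining the last two displays gives the desired inequality.

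For equality, I need both pointwise equalities $\langle W^{+}(\alpha^{+}),\alpha^{+}\rangle=w_{1}^{+}|\alpha^{+}|^{2}$ (forcing $\alpha^{+}$ to lie in the smallest eigenspace of $W^{+}$, which is meaningful wherever $\alpha^{+}\neq 0$, i.e.\ everywhere by the non-degeneracy hypothesis) and $w_{1}^{-}=0$. The latter, together with $w_{1}^{-}+w_{2}^{-}+w_{3}^{-}=0$ and the ordering, forces $w_{1}^{-}=w_{2}^{-}=w_{3}^{-}=0$, hence $W^{-}\equiv 0$. The main delicate point is keeping track of the correct signs and constants in the Weitzenböck formula; once that is in place the algebraic manipulation using (\ref{[1.6]}) and the tracelessness of $W^{-}$ is straightforward.
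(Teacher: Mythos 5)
Your proof is correct and follows essentially the same route as the paper: the Weitzenb\"ock formula for harmonic self-dual $2$-forms, the pointwise bound $\langle W^{+}\alpha^{+},\alpha^{+}\rangle\geq w_{1}^{+}|\alpha^{+}|^{2}$, the identity $K_{1}^{\perp}=\frac{w_{1}^{+}+w_{1}^{-}}{2}+\frac{s}{12}$, and $w_{1}^{-}\leq 0$ from the tracelessness of $W^{-}$. Your treatment of the equality case (using non-degeneracy so that $|\alpha^{+}|^{2}>0$ everywhere, forcing $w_{1}^{-}\equiv 0$ and hence $W^{-}\equiv 0$) is in fact slightly more explicit than the paper's.
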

\begin{proof}
Since $\alpha^{+}\in H^{+}(M,\Bbb{R})$ is a non-degenerate harmonic 2-form, we have the following Weitzenb\"och formula
\begin{equation}
\label{[1.91]}
0=\langle \Delta\alpha^{+}, \alpha^{+} \rangle = \frac{1}{2} \Delta\mid  \alpha^{+} \mid ^2 + \mid   \nabla \alpha^{+}   \mid^2 + \langle(\frac{s}{3} - 2W^{+})\alpha^{+}, \alpha^{+} \rangle.
\end{equation} Moreover, taking into account that $w_1^{+}$ is the smallest eigenvalue of $W^{+}$ we infer $$\langle W^{+}(\alpha^{+}),\alpha^{+}\rangle\geq w_{1}^{+}\langle\alpha^{+},\alpha^{+}\rangle.$$ Hence, upon integrating (\ref{[1.91]}) over $M^4$ we use the above information to achieve

\begin{eqnarray}
\label{klh}
\int_{M}|\nabla\alpha^{+}|^{2} dV_{g}+\int_{M}\frac{s}{3}|\alpha^{+}|^{2}dV_{g}\geq 2\int_{M}\big(w_{1}^{+}+w_{1}^{-}\big)|\alpha^{+}|^{2}dV_{g}.
\end{eqnarray} Substituting (\ref{[1.6]}) into (\ref{klh}) we obtain
$$\int_{M}|\nabla \alpha^{+}|^{2}dV_{g}\geq \frac{2}{3}\int_{M}\big(6K_{1}^{\perp}-s\big)|\alpha^{+}|^{2}dV_{g},$$ which is the desired result.
\end{proof}

Before to proceed, we recall that $w_{1}^{+}: M^4\to (-\infty, 0]$ is a Lipschitz continuous function. Therefore, following the ideas developed in \cite{LeBrun} we define $f_{-}(x)=\min\{f(x),0\},$ where $f$ is a real-valued function on $M^4.$ From this, we have the following proposition, which is a slightly modification of Proposition 2.2 in  \cite{LeBrun}.

\begin{proposition}
\label{propT}
Let $M^4=X\sharp j \Bbb{CP}^2$ be a compact K\"ahler surface, where $X$ is the minimal model of $M^4.$ Suppose that $Y(M)<0.$ Then we have:
\begin{equation}
\label{67}
\frac{\pi^2}{2}\big(2\chi(M)+3\tau(M)\big)\le \Big(\int_{M}|\frac{1}{2}(K_{1}^{\perp}+\frac{s}{12})_{-}|^3 dV_{g}\Big)^{\frac{2}{3}} \vol(M,g)^{\frac{1}{3}}.
\end{equation} Moreover, if (\ref{67}) is actually an equality, then $M^4$ is half-conformally flat.
\end{proposition}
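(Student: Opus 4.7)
The plan is to prove the inequality in two logical stages: first derive the $L^2$ bound
\[
\int_M \phi_-^2\,dV_g \ge \frac{\pi^2}{2}\bigl(2\chi(M) + 3\tau(M)\bigr), \qquad \phi := \tfrac{1}{2}\bigl(K_1^\perp + s/12\bigr),
\]
via Seiberg-Witten methods together with Lemma \ref{lemM}, and then upgrade it to (\ref{67}) by H\"older's inequality. This parallels Proposition 2.2 of \cite{LeBrun}, with the sectional-curvature Weitzenb\"ock used there replaced by the biorthogonal-curvature Weitzenb\"ock of Lemma \ref{lemM}; that substitution is what produces the combination $\tfrac{1}{2}(K_1^\perp+s/12)$ instead of $\tfrac{1}{2}(K+s/12)$, and it is the sense in which the statement is a ``small modification'' of LeBrun's.

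First I would set up the Seiberg-Witten input. Since $M^4=X\sharp j\Bbb{CP}^2$ is K\"ahler with $Y(M)<0$, the minimal model $X$ is of general type, so the canonical class $c_1(K_X)$, twisted by the exceptional classes with signs chosen to maximize the self-dual part, defines a monopole class $\alpha\in\mathcal{C}$ on $M^4$ with $(\alpha^+)^2 \ge 2\chi(M)+3\tau(M)$. For each metric $g$, take a sequence of perturbed Seiberg-Witten solutions $(\Phi_\varepsilon,A_\varepsilon)$; as $\varepsilon\to 0$ the self-dual curvatures $F^+_{A_\varepsilon}$ converge weakly in $L^2$ to a constant multiple of the $g$-harmonic self-dual representative $\omega_h$ of $\alpha^+$. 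The spinor Weitzenb\"ock and the identity $|\sigma(\Phi)|^2=\tfrac{1}{8}|\Phi|^4$ yield $\int_M|\Phi_\varepsilon|^4\,dV_g\to 32\pi^2(\alpha^+)^2$ in the limit. Applying Lemma \ref{lemM} to $\omega_h$ gives
\[
\int_M|\nabla\omega_h|^2\,dV_g\ge\frac{2}{3}\int_M(6K_1^\perp-s)|\omega_h|^2\,dV_g,
\]
and combining this with the spinor Weitzenb\"ock for $\Phi_\varepsilon$, together with the rearrangement $6K_1^\perp - s = 12\phi - 3s/2$, converts the biorthogonal inequality into the desired $L^2$ bound $\int_M \phi_-^2\,dV_g \ge \tfrac{\pi^2}{2}(2\chi(M)+3\tau(M))$.

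Finally, H\"older's inequality with exponents $3/2$ and $3$ yields
\[
\int_M\phi_-^2\,dV_g \le \Bigl(\int_M|\phi_-|^3\,dV_g\Bigr)^{2/3}\vol(M,g)^{1/3},
\]
and chaining the two inequalities gives (\ref{67}). For the equality case, equality in H\"older forces $|\phi_-|$ to be constant almost everywhere, and equality in Lemma \ref{lemM} forces $W^-=0$ with $\omega_h$ lying in the smallest eigenspace of $W^+$; hence $M^4$ is half-conformally flat. The main obstacle I expect is the middle step: combining the pointwise spinor Weitzenb\"ock with the \emph{integrated} biorthogonal Weitzenb\"ock of Lemma \ref{lemM} so that the constants come out to exactly $\pi^2/2$ and the combination $K_1^\perp+s/12$ emerges cleanly on the right-hand side, rather than with leftover $s$-dependent cross-terms. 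In LeBrun's original argument the analogous combination is carried out pointwise using the eigenvalue structure of $W^+$; here, because Lemma \ref{lemM} is integrated, the whole argument must be handled at the $L^2$ level, and the tracking of constants via Cauchy-Schwarz is what makes the proof delicate.
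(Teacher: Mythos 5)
Your overall architecture is not the one the paper uses, and the place where the two diverge is exactly where your argument has a genuine gap. The paper never passes through an unweighted $L^2$ estimate: it runs LeBrun's Proposition 2.2 argument at the level of the Seiberg--Witten spinor, keeping all curvature quantities weighted by powers of $|\phi|$. Concretely, it takes LeBrun's inequality (\ref{p10}) involving $\big(\tfrac{2}{3}s+2w_{1}^{+}\big)_{-}|\phi|^{4}$, the standard estimate (\ref{p2}), and then the only genuinely new ingredient: the trivial pointwise inequality $w_{1}^{-}\le 0$, i.e.\ (\ref{p3}), which converts $\tfrac{2}{3}s+2w_{1}^{+}$ into $\tfrac{s}{3}+4K_{1}^{\perp}=8\cdot\tfrac{1}{2}\big(K_{1}^{\perp}+\tfrac{s}{12}\big)$ inside the negative part. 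H\"older is then applied to the \emph{weighted} integrals $\int\big(\tfrac{s}{3}+4K_{1}^{\perp}\big)_{-}|\phi|^{4}$ versus $\int|\phi|^{6}$, and the topology enters only through $\int|\phi|^{4}\ge 32\pi^{2}\big(2\chi(M)+3\tau(M)\big)$, Eq.\ (\ref{g5}). Lemma \ref{lemM} is not used at all in this proof (it is the tool for Theorem \ref{thmmono}), and the equality case follows from equality in (\ref{p3}), i.e.\ $w_{1}^{-}\equiv 0$, which forces $W^{-}=0$ by (\ref{eigenvalues}).

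The gap in your proposal is your Stage 1. The bound $\int_{M}\phi_{-}^{2}\,dV_{g}\ge\tfrac{\pi^{2}}{2}\big(2\chi(M)+3\tau(M)\big)$ is \emph{strictly stronger} than the proposition itself (it implies (\ref{67}) by the very H\"older step you invoke), and it is precisely the step you leave unproved, flagging it as the ``main obstacle.'' It does not follow from the ingredients you list: the spinor Weitzenb\"ock controls only the $|\Phi|$-weighted quantities $\int(-s)|\Phi|^{4}$, $\int|\Phi|^{2}|\nabla_{A}\Phi|^{2}$, $\int|\Phi|^{6}$, and combining these with the \emph{unweighted} integrated inequality of Lemma \ref{lemM} for the limiting harmonic form does not produce an unweighted $L^{2}$ bound with the sharp constant --- removing the weight is exactly what costs a constant. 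Indeed, the paper's own Theorem \ref{thmmono}, which runs the Lemma \ref{lemM}--based argument with LeBrun's rescaled equations, only achieves $\int\big[\tfrac{1}{2}(K_{1}^{\perp}+\tfrac{s}{12})\big]^{2}\ge\tfrac{\pi^{2}}{4}\beta^{2}(M)$, a factor $2$ weaker than what your Stage 1 asserts; and LeBrun's known sharp $L^{2}$ estimate involves $s-\sqrt{6}|W^{+}|$ rather than the lowest-eigenvalue quantity, so your claimed inequality is not among the available estimates and is not obviously true. This is why both LeBrun and the paper state the result in the $L^{3}$--$\mathrm{Vol}^{1/3}$ form, with H\"older applied to the spinor-weighted integrals rather than to an $L^{2}$ bound. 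Your equality discussion inherits the same gap, since it is tied to the unproved Stage 1; in the paper the rigidity statement comes solely from forcing equality in $w_{1}^{-}\le 0$.
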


\begin{proof}
We start applying the same arguments used in the proof of Proposition 2.2 of \cite{LeBrun} in order to obtain
\begin{eqnarray}
\label{p10}
-\int_{M}\Big(\frac{2}{3}s+2w_{1}^{+}\Big)_{-}|\phi|^4 dV_{g}\ge \int_{M}(-s)|\phi|^{2}dV_{g}-4\int_{M}|\phi|^{2}|\nabla\phi|^{2}dV_{g},
\end{eqnarray} where $\phi$ is a solution of the Seiberg-Witten equations. Moreover, we already know that
\begin{eqnarray}
\label{p2}
\int_{M}(-s)|\phi|^4 dV_{g}\ge 4\int_{M}|\phi|^{2} |\nabla\phi|^{2} dV_{g}+\int_{M}|\phi|^{6} dV_{g},
\end{eqnarray} for details see Eq. (7) in \cite{LeBrun} (see also pg. 399 in \cite{scorpan}).

On the other hand, it follows immediately from (\ref{eigenvalues}) that
\begin{equation}
\label{p3}
\frac{2}{3}s+2w_{1}^{+}+2w_{1}^{-}\leq \frac{2}{3}s+2w_{1}^{+},
\end{equation} with equality if and only if $M^4$ is half-conformally flat. Hence, combining (\ref{p10}), (\ref{p2}), (\ref{[1.6]}) and (\ref{p3}) we get

\begin{eqnarray}
\label{g1}
-\int_{M}\big(\frac{s}{3}+4K_{1}^{\perp}\big)_{-} |\phi|^{4}dV_{g}\ge \int_{M}|\phi|^{6}dV_{g}.
\end{eqnarray} Now, we use the H\"older inequality to infer

\begin{equation}
\label{g2}
-\int_{M}\big(\frac{s}{3}+4K_{1}^{\perp}\big)_{-}|\phi|^{4} dV_{g}\leq \Big(\int_{M}|(\frac{s}{3}+4K_{1}^{\perp})_{-}|^{3}dV_{g}\Big)^{\frac{1}{3}}\Big(\int_{M}|\phi|^{6}dV_{g}\Big)^{\frac{2}{3}}
\end{equation}
and
\begin{equation}
\label{g3}
\int_{M}|\phi|^{6}dV_{g}\ge \vol(M,g)^{-\frac{1}{2}}\Big(\int_{M}|\phi|^{4}dV_{g}\Big)^{\frac{3}{2}}.
\end{equation} Therefore, comparing (\ref{g1}) with (\ref{g2}) we deduce

\begin{equation}
\int_{M}|\phi|^{6} dV_{g} \le \int_{M}|\big(\frac{s}{3}+4K_{1}^{\perp}\big)_{-}|^{3} dV_{g}.
\end{equation} We then use (\ref{g3}) to achieve

\begin{equation}
\label{g4}
\int_{M}|\phi|^4 dV_{g}\leq \Big(\int_{M}|\big(\frac{s}{3}+4K_{1}^{\perp}\big)_{-}|^{3} dV_{g}\Big)^{\frac{2}{3}} \vol(M,g)^{\frac{1}{3}}.
\end{equation} But, we already know from Seiberg-Witten theory (cf. Eq. (4) in \cite{LeBrun}) that
\begin{equation}
\label{g5}
32 \pi^{2}\big(2\chi(M)+3\tau(M)\big)\le\int_{M}|\phi|^{4}dV_{g}.
\end{equation} Whence, combining (\ref{g4}) with (\ref{g5}) we deduce (\ref{67}). Finally, if equality occurs in (\ref{67}) we have $w_{1}^{-}\equiv 0,$ and in this case we can use (\ref{eigenvalues}) to conclude that $M^4$ must be half-conformally flat, which finishes the proof of the proposition.
\end{proof}

\subsection{Additional Notation} We need to fix more notation that will be useful in the proofs of Theorems \ref{thmSub} and \ref{thmSub2}. To start with, we consider $M^n$ to be an $n$-dimensional compact submanifold in an $(n+m)$-dimensional Riemannian manifold $N^{n+m}.$ We adopt the following convention on the indices:

\begin{equation*}
1\leq i,\,j,\,k\leq n\,\,\hbox{and}\,\,\,n+1\leq\beta,\,\gamma\leq n+m.
\end{equation*} Moreover, $R_{ijkl},$ $\overline{R}_{ijkl},$ $\alpha$ and $\vec{H}$ stand for the Riemannian curvature tensor of $M^4,$ Riemannian curvature tensor of $N^{n+m},$ second fundamental form and mean curvature vector, respectively. From this it follows that

\begin{equation*}
R_{i\,j\,k\,l}=\overline{R}_{ijkl}+\sum_{\beta}\Big(\alpha_{ik}^{\beta}\alpha_{jl}^{\beta}-\alpha_{il}^{\beta}\alpha_{jk}^{\beta}\Big)
\end{equation*}
and
\begin{equation*}
R_{\beta\gamma\, k\,l}=\overline{R}_{\beta\gamma\, k\,l}+\sum_{i}\Big(\alpha_{ik}^{\beta}\alpha_{il}^{\gamma}-\alpha_{il}^{\beta}\alpha_{ik}^{\gamma}\Big).
\end{equation*} We further have $$\vec{H}=\frac{1}{n}\sum_{\beta,\,i}\alpha_{ii}^{\beta}e_{\beta}.$$ It is easy to check, from Gauss Equation, that
\begin{equation}
\label{rice}
Ric(e_{i})=\sum_{j}\overline{R}_{ijij}+\sum_{\beta,\,j}\Big[\alpha_{ii}^{\beta}\alpha_{jj}^{\beta}-\big(\alpha_{ij}^{\beta}\big)^{2}\Big].
\end{equation} Moreover, if $N$ has constant sectional curvature $c,$ then the scalar curvature of $M^n$ is given by

\begin{equation}
\label{scal}
s=n(n-1)c+n^{2}H^{2}-\|\alpha\|^{2},
\end{equation} where $H$ is the length of the mean curvature vector.

\section{Proof of the Main Results}
\subsection{Proof of Theorem \ref{thmT}}

\begin{proof}
First of all, for any function $\phi$ on $M^4$ we set $\overline{g}=e^{2\phi}g\in[g].$ From this it follows that $$\overline{s}=e^{-2\phi}(-6\Delta\phi-6|\nabla\phi|^2+s)$$  and $$6e^{2\phi}\overline{K}_{1}^\perp=6K_{1}^\perp-3\Delta\phi-3|\nabla\phi|^2,$$ which can be rewritten as $$e^{2\phi}\Big(\overline{K}_{1}^\perp+\frac{\overline{s}}{12}\Big)=K_{1}^{\perp}-\Delta\phi-|\nabla\phi|^{2}+\frac{s}{12}.$$

On the other hand, notice that $$s+3(w_{1}^{+}+w_{1}^{-})=\frac{1}{2}\Big(K_{1}^{\perp}+\frac{s}{12}\Big)$$ is a modified scalar curvature (cf. \cite{CDR} and \cite{Itoh}). Therefore, since $Y_{s,K_{1}^{\perp}}(M)\leq Y(M)<0,$ we can apply Itoh's theorem, Theorem A \cite{Itoh} to deduce that there is a metric $g\in[g]$ for which the modified scalar curvature $\frac{1}{2}\Big(K_{1}^{\perp}+\frac{s}{12}\Big)$ is a  negative constant. Hence, a straightforward computation combining these informations and Stokes formula yields
\begin{equation}
\label{p0}
\int_{M}|K_{1}^{\perp}+\frac{s}{12}|dV_{g}\leq \int_{M}e^{2\phi}|\overline{K}_{1}^{\perp}+\frac{\overline{s}}{12}|dV_{g}.
\end{equation}  Next, we use the Cauchy-Schwarz inequality as well as $dV_{\overline{g}}=e^{4\phi}dV_{g}$ to obtain
\begin{eqnarray}
\label{p1}
\frac{1}{\vol(M,g)}\Big(\int_{M}|\frac{1}{2}\Big(K_{1}^{\perp}+\frac{s}{12}\Big)|dV_{g}\Big)^{2}\leq \int_{M}|\frac{1}{2}\Big(\overline{K}_{1}^{\perp}+\frac{\overline{s}}{12}\Big)|^{2}dV_{\overline{g}},
\end{eqnarray} We then invoke Proposition \ref{propT} to infer

\begin{eqnarray}
\label{mn1}
\int_{M}|\frac{1}{2}\Big(\overline{K}_{1}^{\perp}+\frac{\overline{s}}{12}\Big)|^{2}dV_{\overline{g}}&\ge& [\vol(M,g)]^{\frac{1}{3}}\Big(\int_{M}|\frac{1}{2}\Big(K_{1}^{\perp}+\frac{s}{12}\Big)|^{3}dV_{g}\Big)^{\frac{2}{3}}\nonumber\\&\ge& \frac{\pi^{2}}{2}\big(2\chi(M)+3\tau(M)\big).
\end{eqnarray}

Since $Y(M)<0,$ we immediately deduce $\frac{1}{2}\Big(\overline{K}_{1}^{\perp}+\frac{\overline{s}}{12}\Big)\geq -1.$ From now on, without loss of generality, we may consider $g$ instead $\overline{g}.$ Thereby, we infer $$\inf_{g\in \mathcal{M}_{K_{1}^{\perp},s}}\int_{M}|\frac{1}{2}\Big(K_{1}^{\perp}+\frac{s}{12}\Big)|^2 dV_{g} \geq \inf_{g\in\mathcal{M}} \int_{M}|\frac{1}{2}\Big(K_{1}^{\perp}+\frac{s}{12}\Big)|^2 dV_{g},$$ where $\mathcal{M}_{K_{1}^{\perp},s}$ is the set of metrics such that $\big(K_{1}^{\perp}+\frac{s}{12}\big)\geq -1.$ Of which we obtain
\begin{equation*}
\vol_{K,s}(M^4)\geq\vol_{K_1^\perp,s}(M^4)\geq\inf_{g\in\mathcal{M}} \int_{M}|\frac{1}{2}\Big(K_{1}^{\perp}+\frac{s}{12}\Big)|^2 dV_{g}\geq\frac{9}{4}\vol_s(M^4).
\end{equation*} Finally, it suffices to repeat the final arguments of the proof of Lemma 2.6 in \cite{Itoh} (see also Proposition 3 in \cite{CDR}) to deduce

$$|Y_{K_{1}^{\perp},s}(M)|^{2}=\inf_{g\in\mathcal{M}} \int_{M}|\frac{1}{2}\Big(K_{1}^{\perp}+\frac{s}{12}\Big)|^2 dV_{g},$$ which finishes the proof of the theorem.
\end{proof}

\subsection{Proof of Theorem \ref{thmmono}}
\begin{proof}
The proof is inspired by the trend developed by LeBrun \cite{LeBrunP}. To start with, we invoke his Lemma 3.7  to deduce that, for any smooth positive function $f$ on $M^4,$ the rescaled Seiberg-Witten equations have a solution $(\phi,\,A),$ namely, 

\begin{eqnarray}
\left \{ \begin{array}{ll}
D^{A}\phi=0 \\
-iF_{A}=f\sigma(\phi).
                    \end{array} \right.
\end{eqnarray} Next, we remember that
$$0\geq \int_{M}\Big[4|\phi|^{2}|\nabla_{A}\phi|^{2}+s|\phi|^{4}+f|\phi|^{6}\Big]dV_{g}$$ (cf. Eq. (19) in \cite{LeBrunP}). Now, setting $\psi=2\sqrt{2}\sigma(\phi)$ and invoking Lemma \ref{lemM} we arrive at
\begin{eqnarray*}
0\geq \int_{M}\Big[s|\psi|^{2}+f|\psi|^{3}\Big]dV_{g}+\frac{2}{3}\int_{M}\Big(6K_{1}^{\perp}-s\Big)|\psi|^{2}dV_{g},
\end{eqnarray*} which can be written succinctly as
\begin{eqnarray*}
0\geq 4\int_{M}\Big[K_{1}^{\perp}+\frac{s}{12}\Big]|\psi|^{2}dV_{g}+\int_{M}f|\psi |^{3}dV_{g}.
\end{eqnarray*} We now consider $\gamma=\frac{1}{4}\psi$ to infer

\begin{eqnarray*}
-\int_{M}\Big(K_{1}^{\perp}+\frac{s}{12}\Big)|\gamma|^{2}dV_{g}\geq \int_{M}f|\gamma|^{3}dV_{g}.
\end{eqnarray*} From here it follows that
\begin{eqnarray*}
\int_{M}\Big[-\big(K_{1}^{\perp}+\frac{s}{12}\big)f^{-\frac{2}{3}}\Big]\Big[f^{\frac{2}{3}}|\gamma|^{2}\Big]dV_{g}\geq \int_{M}f|\gamma|^{3}dV_{g}.
\end{eqnarray*}By the H\"older inequality,  one verifies  that

\begin{eqnarray}
\label{12r}
\int_{M}|K_{1}^{\perp}+\frac{s}{12}|^{3}f^{-2}dV_{g}\geq \int_{M}f |\gamma|^{3}dV_{g}
\end{eqnarray}

On the other hand, using once more the H\"older inequality we ensure

\begin{eqnarray}
\Big(\int_{M}f^{4}dV_{g}\Big)^{\frac{1}{3}}\Big(\int_{M}f|\gamma|^{3}dV_{g}\Big)^{\frac{2}{3}}\geq \int_{M}f^{\frac{4}{3}}\big(f^{\frac{2}{3}}|\gamma|^{2}\big)dV_{g}.
\end{eqnarray} This combined with (\ref{12r}) yields

\begin{eqnarray}
\Big(\int_{M}f^{4}dV_{g}\Big)^{\frac{1}{3}}\Big(\int_{M}|K_{1}^{\perp}+\frac{s}{12}|^{3}f^{-2}\Big)^{\frac{2}{3}}\geq \int_{M}f^{2}|\gamma|^{2}dV_{g}.
\end{eqnarray} Note that $\gamma=\frac{\sqrt{2}}{2}\sigma(\phi)$ and thus $f\gamma=\frac{\sqrt{2}}{2}(-i F_{A}^{+}).$ Hence, from Lemma \ref{lemM}  and Proposition 4.5 in \cite{LeBrunP} we get

$$\Big(\int_{M}f^{4}dV_{g}\Big)^{\frac{1}{3}}\Big(\int_{M}|K_{1}^{\perp}+\frac{s}{12}|^{3}f^{-2}\Big)^{\frac{2}{3}}\geq 2\pi^{2}\beta^{2}(M),$$ for any smooth function $f$ on $M^4.$

Now, we choose a decreasing sequence of smooth positive functions $f_{k}$ on $M^4$ such that $$\lim_{k\to \infty}f_{k}^{2}=\frac{1}{2}\Big|K_{1}^{\perp}+\frac{s}{12}\Big|$$ uniformly  on $M^4.$ From this it follows that

$$\int_{M}\Big[\frac{1}{2}\big(K_{1}^{\perp}+\frac{s}{12}\big)\Big]^{2}dV_{g}\ge \frac{\pi^2}{4}\beta^{2}(M),$$ which was to be proved.

Finally, by assuming that $Y(M)<0,$ it suffices to invoke once more Proposition 3 in \cite{CDR} to conclude the proof of the theorem.
\end{proof}

\subsection{Proof of Theorem \ref{thmSub}}
\begin{proof}
To start with, for each point $p\in M^4,$ we consider $\{v_{1},v_{2},v_{3},v_{4}\}$ an orthonormal basis of $T_{p}M,$ and such that $\{\xi_1, \xi_2,...,\xi_m\}$ is an orthonormal referential in $\big(T_{p}M\big)^{\perp}.$ So, the Weingarten operator $A_{\xi_{\beta}},$ in the normal direction $\xi_{\beta},$ is given by $$\langle A_{\xi_{\beta}}v_{i},v_{j}\rangle=\langle \alpha(v_{i},v_{j}),\xi_{\beta}\rangle,$$ where $v_{i},v_{j}\in T_{p}M.$ From this, we have $$\vec{H}=\frac{1}{4}\sum_{\beta\geq 1}\big(tr\, A_{\xi_{\beta}}\big)\xi_{\beta}$$ and $H=\|\vec{H}\|.$ Moreover, we have $$\|\alpha\|^{2}=\sum_{\beta\geq 1}tr\,A_{\xi_{\beta}}^{2}.$$ We set $A_1 = A_{\xi_{1}}$ to be the Weingarten operator of the isometric immersion $f$ in the normal direction $\xi_{1} = \frac{1}{H}\vec{H} \in (T_{p}M)^\perp.$ In particular, notice that  $tr\, A_{1}=4H$ and $tr\,A_{\beta}=0$ for $\beta\ge 2.$ Furthermore, if $X,Y\in \big(T_{p}M\big)^{\perp},$ then $$\alpha(X,Y) = \sum_{\beta\geq 1} \langle A_{\beta}X,Y\rangle\xi_{\beta}.$$ Next, it follows from Gauss Equation that
$$ Ric(v_1) = \sum_{\beta\geq 1}\langle A_{\beta}v_1, v_1\rangle \sum_{i\neq 1}\langle A_{\beta}v_i,v_i\rangle  - \sum_{\beta\geq 1}\sum_{i\neq 1}\langle A_{\beta}v_i, v_1\rangle^2 + 3c.$$ Similarly, it is not difficult to check that
$$ Ric(v_2) = \sum_{\beta\geq 1}\langle A_{\beta}v_2, v_2\rangle \sum_{i\neq 2}\langle A_{\beta}v_i,v_i\rangle  - \sum_{\beta\geq 1}\sum_{i\neq 2}\langle A_{\beta}v_i, v_2\rangle^2 + 3c.$$

A straightforward computation taking into account these two above equations yields

\begin{eqnarray}
Ric(v_1) + Ric(v_2) &=& 4H\Big[\langle A_1v_1,v_1\rangle + \langle A_1v_2,v_2\rangle\Big]\nonumber\\&&-\sum_{\beta\geq 1}\Big[\langle A_{\beta}v_1,v_1\rangle^2 + \langle A_{\beta}v_2,v_2\rangle^2\Big]\nonumber\\&&-\sum_{\beta\geq 1}\Big[\sum_{i\neq 1}\langle A_{\beta}v_i,v_1\rangle^2 + \sum_{i\neq 2}\langle A_{\beta}v_i,v_2\rangle^2\Big] + 6c.
\end{eqnarray} Then, after some simple computations we arrive at

\begin{eqnarray}
Ric(v_1)+ Ric(v_2) &=& -\Big[\langle A_1v_1,v_1\rangle + \langle A_1v_2,v_2\rangle -2H\Big]^2\nonumber\\&& -\sum_{\beta\geq 1}\Big[\sum_{i\neq 1}\langle A_{\beta}v_i,v_1\rangle^2 + \sum_{i\neq 2}\langle A_{\beta}v_i,v_2\rangle^2\Big] + 4H^2+6c\nonumber\\&&+
 2\sum_{\beta\geq 1}\langle A_{\beta}v_1,v_1\rangle \langle A_{\beta}v_2,v_2\rangle
-\sum_{\beta\geq 2}\Big[\langle A_{\beta}v_1,v_1\rangle + \langle A_{\beta}v_2,v_2\rangle\Big]^2.
\end{eqnarray} At the same time, we invoke again Gauss Equation to infer $$K(v_1,v_2) = \sum_{\beta\geq 1}\langle A_{\beta}v_1,v_1\rangle \langle A_{\beta}v_2,v_2\rangle - \sum_{\beta\geq 1}\langle A_{\beta}v_1,v_2\rangle^2 + c.$$ Of which we deduce

$$Ric(v_1)+ Ric(v_2) \leq 2K(v_1,v_2) + 4c + 4H^2 + a + 2\sum_{\beta\geq 1}\langle A_{\beta}v_1,v_2\rangle^2,$$
where $a=\sum_{\beta\geq 1}\Big[\sum_{i\neq 1}\langle A_{\beta}v_i,v_1\rangle^2 + \sum_{i\neq 2}\langle A_{\beta}v_i,v_2\rangle^2\Big].$ In particular, we have
\begin{eqnarray}
 -\sum_{\beta\geq 1}\Big[\sum_{i\neq 1}\langle A_{\beta}v_i,v_1\rangle^2 &+& \sum_{i\neq 2}\langle A_{\beta}v_i,v_2\rangle^2\Big] + 2\sum_{\beta\geq 1}\langle A_{\beta}v_1,v_2\rangle^2 \nonumber\\&=& -\sum_{\beta\geq 1, i\neq 1,2}\Big[\langle A_{\beta}v_i,v_1\rangle^2 + \langle A_{\beta}v_i,v_2\rangle^2\Big] \leq 0.\nonumber
\end{eqnarray} This data allows us to infer $$2K(v_1,v_2) \geq Ric(v_1)+ Ric(v_2) -4H^2 - 4c$$ as well as
$$2K(v_3,v_4) \geq Ric(v_3)+ Ric(v_4) -4H^2 - 4c.$$ Hence, it follows that
$$4K^\perp(P) \geq s - 8H^2 - 8c,$$ where $s$ denotes the scalar curvature of $M^4$ and $P$ is the 2-plane generated by $v_1$ and $v_2.$ Therefore, we immediately have $$4K_1^\perp \geq s - 8H^2 - 8c$$ and this combined with (\ref{scal}) guarantees $$4K_{1}^\perp\geq -\|\alpha\|^2 + 8H^2 +4c,$$ as we wanted to prove.

Suppose that $\|\alpha\|^{2}<4\big(2H^{2}+c\big).$ From this, we apply Theorem \ref{thmAC} to conclude $H_{2}(M,\Bbb{Z})=0$ and then it suffices to use Lemma 2.2 in \cite{AC} to conclude that $M^4$ is homeomorphic to the sphere $\Bbb{S}^4.$ This concludes the proof of the theorem.
\end{proof}

\subsection{Proof of Theorem \ref{thmSub2}}
\begin{proof}
In order to prove the first assertion we assume that $M^4$ is isometrically immersed into  $\Bbb{S}^{4+m}.$ Moreover, we already know from Theorem \ref{thmSub} that $$4K_{1}^\perp\geq -\|\alpha\|^2 + 8H^2 +4.$$ Taking into account that $\|\alpha\|^{2}< 4$ we immediately deduce $4K_{1}^\perp> 8H^{2}\ge 0,$ in other words, $M^4$ has positive biorthogonal curvature. Hence, it is easy to see that (\ref{estAC}) holds (for $p=2$). Whence, we may use again Theorem \ref{thmAC} to deduce that $H_{2}(M,\Bbb{Z})=0$ and since $M^4$ has finite fundamental group, we can apply Lemma 2.2 in \cite{AC} to conclude that $M^4$ is homeomorphic to the sphere $\Bbb{S}^4,$ as desired.

Next, supposing $\|\alpha\|^{2}\leq 4,$ we deduce from Theorem \ref{thmSub} that $M^4$ has nonnegative biorthogonal curvature. In particular, if for every point of $M^4$ some biorthogonal curvature vanishes, then $H=0$ and $\|\alpha\|^{2}=4.$ Finally, if $m=1$ we invoke Chern-Do Carmo-Kobayashi theorem \cite{carmo} to conclude that $M^4$ must be  $\Bbb{S}_{c_{1}}^{2}\times \Bbb{S}_{c_{2}}^{2}.$ So, the proof is completed.
\end{proof}

\section{Appendix}
In this appendix we going to provide topological obstructions for the existence of Einstein structures by using the modified Yamabe invariant $Y_{1}^{\perp}(M)$ defined in (\ref{Yi}). To this end, we need the following lemma. 

\begin{lemma}\label{thObstEinstein}
Let $M^4$ be a 4-dimensional oriented compact manifold admitting an Einstein metric $g.$ Suppose that $Y_1^\perp(M)\leq0.$ Then the following assertions hold:
\begin{enumerate}
\item  $$\chi(M)\geq\frac{1}{576\pi^2}|Y_1^\perp(M)|^2.$$ Moreover, if the equality holds, then $M^4$ is either flat or $\Bbb{H}^2_c\times\Bbb{H}^2_c /\Gamma.$
\item If $g$ is half-conformally flat, then $$\chi(M)\geq \frac{1}{384\pi^{2}}|Y_1^\perp(M)|^2.$$ In particular, if the equality holds, then $M^4$ is a compact complex-hyperbolic 4-manifold $\Bbb{C}\mathcal{H}^{2}/\Gamma.$
\end{enumerate}
\end{lemma}

\begin{proof}
We start recalling that
\begin{equation}
\label{eqT}
|w_{1}^{\pm}|^2\le \frac{2}{3}|W^{\pm}|^2.
\end{equation} Moreover, the equality holds in (\ref{eqT}) if and only if $w_{3}^{\pm}=w_{2}^{\pm}$ (cf. Lemma 3.2 (a) in \cite{noronha2}). From here it follows that

\begin{equation}
\label{plh}
| W^+ |^2 + |W^-|^2 \geq\frac{3}{2}\Big[(w_1^+)^2+(w_1^-)^2\Big]=\frac{1}{24}\Big[(6w_1^+)^2+(6w_1^-)^2\Big].
\end{equation}

On the other hand, since $M^4$ admits an Einstein metric $g$ we may use Chern-Gauss-Bonnet formula (\ref{characteristic}) jointly with (\ref{plh}) to get
\begin{equation*}
8\pi^2\chi(M)\geq\frac{1}{24}\int_M\left(s^{2}+(6w_{1}^{+})^2+(6w_{1}^{-})^2\right)dV_g.
\end{equation*} Next, by the Cauchy-Schwarz inequality and (\ref{[1.6]}) we infer
\begin{eqnarray*}
8\pi^2\chi(M)&\geq&\frac{1}{72}\int_M(s+6w_1^++6w_1^-)^2dV_g=\frac{1}{72}\int_M 144(K_1^\perp)^2dV_g,
\end{eqnarray*} so that

\begin{equation}
\label{kjg}
4\pi^{2}\chi(M)\geq \int_{M}|K_{1}^{\perp}|^{2}dV_{g}.
\end{equation} Moreover, if the equality holds in (\ref{kjg}), then $w_1^+=w_1^-=\frac{s}{6}$ and $w_2^\pm=w_3^\pm.$ In this case $M^4$ must be $\Bbb{H}^2_c\times\Bbb{H}^2_c /\Gamma.$

At the same time, taking into account that $$4\pi^2\chi(M)\geq\inf_{g\in\mathcal{M}}\int_M|K_{1}^{\perp}|^2dV_g,$$ we may use Proposition 3 in \cite{CDR} (see also Lemma 2.6 in \cite{Itoh}) to deduce $$4\pi^2\chi(M)\geq\frac{1}{144}|Y_1^\perp(M)|^2.$$ In particular, the equality holds if and only if $M^4$ is either flat or $\Bbb{H}^2_c\times\Bbb{H}^2_c /\Gamma,$ which establishes the fist assertion.

Next, the proof of the second assertion looks like that one of the previous assertion. Indeed, without loss of generality we may assume that $M^4$ is self-dual. In such a case, a standard computation allows us to obtain

\begin{eqnarray}
8\pi^2 \chi(M)\geq \frac{1}{24}\int_{M}\Big[(6w_{1}^{+})^{2}+s^{2}\Big]dV_{g}\geq \frac{1}{48}\int_{M}\Big[6w_{1}^{+}+s\Big]^{2}dV_{g}\geq 3\inf_{g\in\mathcal{M}}\int_M |K_{1}^{\perp}|^2dV_g.
\end{eqnarray} To conclude it suffices to use again Proposition 3 in \cite{CDR} to achieve $$\chi(M)\geq \frac{1}{384\pi^{2}}|Y_1^\perp(M)|^2,$$ which gives the requested result.
\end{proof}

For what follows, it is important to recall that Taubes \cite{taubes} showed that for any smooth compact oriented four-dimensional manifold $X,$ there is an integer $j$ such that $M^4=X\sharp j \overline{\Bbb{CP}}^2$ admits half-conformally flat metrics. The minimal number for $j$ is called {\it Taubes invariant}, which is unknown for most of $4$-manifolds. 

Next, as an application of Lemma \ref{thObstEinstein} we deduce the following obstruction result.

\begin{proposition}\label{corObstEinstein}
$\Bbb{H}^2_c\times\Bbb{H}^2_c /\Gamma\sharp j(\Bbb{S}^1\times\Bbb{S}^3)$ does not admit Einstein metric provided that $j>\frac{4}{9}\chi( \Bbb{H}^2_c\times\Bbb{H}^2_c /\Gamma).$
\end{proposition}

\begin{proof}
First, we consider $M=\Bbb{H}^2_c\times\Bbb{H}^2_c /\Gamma\sharp j(\Bbb{S}^1\times\Bbb{S}^3)$ and $e=\chi(\Bbb{H}^2_c\times\Bbb{H}^2_c /\Gamma).$ Hence, we deduce $\chi(M)=e-2j$ and $\tau(\Bbb{H}^2_c\times\Bbb{H}^2_c /\Gamma)=0.$ Further, since $\Bbb{H}^2_c\times\Bbb{H}^2_c /\Gamma$ is the minimal model of $M^4$ we may invoke Theorem 3.9 of \cite{LeBrunTrends} to infer
\begin{equation*}
Y(\Bbb{H}^2_c\times\Bbb{H}^2_c /\Gamma)=-8\pi\sqrt{e}.
\end{equation*} We then use Proposition 3 in \cite{Petean} to deduce $Y(M)=Y(\Bbb{H}^2_c\times\Bbb{H}^2_c /\Gamma)=-8\pi\sqrt{e}.$

We now suppose that $M^4$ admits an Einstein metric. Thereby, since $Y_1^\perp(M)\leq Y(M)<0,$ we can apply Lemma \ref{thObstEinstein} to infer
\begin{equation*}
576\pi^2\chi(M)\geq|Y_1^\perp(M)|^2\geq|Y(M)|^2.
\end{equation*} From this it follows that $$576\pi^2(e-2j)\geq64e\pi^2,$$ so that $j\leq\frac{4}{9}e,$ which is a contradiction. So, the proof is completed.
\end{proof}

\begin{acknowledgement}
The authors want to thank the referees for their careful readings, relevant remarks and valuable suggestions. The second named author was partially supported by grants from CNPq/Brazil (Grant: 303091/2015-0) and PRONEX-FUNCAP/CNPq/Brazil. He also would like to thank the Department of Mathematics - Universidade Federal da Bahia, where part of this work was carried out, for the warm hospitality.
\end{acknowledgement}

\end{document}